\numberwithin{equation}{section}
\newtheorem{theorem}{Theorem}[section]
\newtheorem{lemma}[theorem]{Lemma}
\theoremstyle{definition}
\newtheorem{definition}[theorem]{Definition}
\def\Z{\mathbb{Z}}
\newcommand{\lcm}{\textnormal{lcm}}
\def\subsection{\@startsection{subsection}{2}
  \z@{.5\linespacing\@plus.7\linespacing}
{.5\baselineskip}%
  {\normalfont\centering\scshape}%
}
\title{Dynamics of the Fibonacci Order of Appearance Map}
\author{Molly FitzGibbons}
\address{Department of Mathematics and Statistics, Williams College, Williamstown, MA 01267}
\email{\textcolor{blue}{\href{mailto:mcf4@williams.edu}{mcf4@williams.edu}}}
\author{Steven J. Miller}
\address{Department of Mathematics and Statistics, Williams College, Williamstown, MA 01267}
\email{\textcolor{blue}{\href{mailto:sjm1@williams.edu}{sjm1@williams.edu}}}
\author{Amanda Verga}
\address{Department of Mathematics, Trinity College, Hartford, CT 06106}
\email{\textcolor{blue}{\href{mailto:amanda.verga@trincoll.edu}{amanda.verga@trincoll.edu}}}
\begin{document}


\begin{abstract}
The \textit{order of appearance} $ z(n) $ of a positive integer $ n $ in the Fibonacci sequence is defined as the smallest positive integer $ j $ such that $ n $ divides the $ j $-th Fibonacci number. 
A \textit{fixed point} arises when, for a positive integer $ n $, we have that the $ n \textsuperscript{th} $  Fibonacci number is the smallest Fibonacci that $ n $ divides. 
In other words, $ z(n) = n $. 

In 2012, Marques proved that fixed points occur only when $ n $ is of the form $ 5^{k} $ or $ 12\cdot5^{k} $ for all non-negative integers $ k $. 
It immediately follows that there are infinitely many fixed points in the Fibonacci sequence. 
We prove that there are infinitely many integers that iterate to a fixed point in exactly $ k $ steps. 
In addition, we construct infinite families of integers that go to each fixed point of the form $12 \cdot 5^{k}$. 
We conclude by providing an alternate proof that all positive integers $n$ reach a fixed point after a finite number of iterations.  
\end{abstract}

\maketitle


\section{Introduction}

In $ 1202 $, the Italian mathematician Leonardo Fibonacci introduced the Fibonacci sequence $ \{F_{n}\}_{n=0}^{\infty} $, defined recursively as $ F_{n} = F_{n-1} + F_{n-2} $ with initial conditions $ F_{0} = 0 $ and $ F_{1} = 1 $.
By reducing $ \{F_{n}\}_{n=0}^{\infty} $ modulo $ m $, we obtain a periodic sequence $ \{F_{n} \mod m \}_{n=0}^{\infty} $.
This new sequence and its divisibility properties have been extensively studied, see for example \cite{M1,M5}. 
To see why the reduced sequence is periodic, note that by the pigeonhole principle if we look at $ n^{2} + 1 $ pairs $(F_{k}, F_{k-1})$, at least two are identical (mod $n$) and the recurrence relation generates the same future terms. 

\begin{definition}
\textnormal{The} \textit{order (or rank) of appearance} $ z(n) $ \textnormal{for a natural number $ n $ in the Fibonacci sequence is the smallest positive integer $ \ell $ such that $ n \mid F_{\ell} $.} 
\end{definition}

Observe that the function $ z(n) $ is well defined for all $ n $ since the Fibonacci sequences begins with $ 0, 1, \ldots $ and when reduced by modulo $ n $, a $0$ will appear again in the periodic sequence.
Thus, there will always be a Fibonacci number that is congruent to $ 0 \mod n $ for each choice of $ n $.
The upper bound of $n^2 + 1$ on $z(n)$ is improved in \cite{S}, which states $ z(n) \leq 2n $ for all $ n \geq 1 $.
This is the sharpest upper bound on $z(n)$. 
In \cite{M2}, sharper upper bounds for $ z(n) $ are provided for some positive integers $ n $.
Additional results on $ z(n) $ include explicit formulae for the order of appearance of some $ n $ relating to sums containing Fibonacci numbers \cite{M3} and products of Fibonacci numbers \cite{M4}.
We study repeated applications of $ z $ on $ n $ and denote the $ k \textsuperscript{th} $ application of $ z $ on $ n $ as $ z^{k}(n) $. 
We are interested in the following quantity.

\begin{definition}
\textnormal{The} \textit{fixed point order} \textnormal{for a natural number $ n $ is the smallest positive integer $k$ such that $z^{k}(n)$ is a fixed point. If $n$ is a fixed point, then we say the \textit{fixed point order} of $n$ is 0.}
\end{definition}

Table \ref{table:zkiterations} shows which values occur after repeated iterations of $ z $ on the first $12$ positive integers. 
We further the study of repeated iterations of $ z $ on $ n $. 





In Section \ref{auxresults}, we provide some useful properties of the order of appearance in Fibonacci numbers.
In the remaining sections, we prove our main results, found below.

\begin{theorem}\label{infntakek} 
For all positive integers $ k $, there exist infinitely many $ n $ with fixed point order $ k $.
\end{theorem}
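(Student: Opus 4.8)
The plan is to realize each target fixed point as the top of a tower of Fibonacci preimages and then to vary the fixed point to obtain infinitely many examples. The central tool is the identity $z(F_m) = m$, valid for every $m \ge 3$: since $F_a \mid F_b$ exactly when $a \mid b$ for $a \ge 3$, the least $\ell$ with $F_m \mid F_\ell$ is $\ell = m$. Thus $F_v$ is always a preimage of $v$ under $z$, and whenever $v \ge 6$ one has $F_v > v$, so this preimage is genuinely larger than $v$. I will also use the standard divisibility criterion $n \mid F_m \iff z(n) \mid m$, taking both facts from the properties of the order of appearance collected earlier.

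Fix a positive integer $k$. For each fixed point $p \ge 6$ (for instance $p = 12 \cdot 5^{j}$, of which there are infinitely many) I define a tower by $a_0 = p$ and $a_i = F_{a_{i-1}}$ for $1 \le i \le k$. Because $a_0 = p \ge 6$ and $F_m > m$ for all $m \ge 6$, the sequence is strictly increasing with every term at least $6$; hence $z(a_i) = z(F_{a_{i-1}}) = a_{i-1}$ for each $i$, and consequently $z^{j}(a_k) = a_{k-j}$ for $0 \le j \le k$. In particular $z^{k}(a_k) = a_0 = p$ is a fixed point, so the fixed-point order of $a_k$ is at most $k$. The order is exactly $k$ provided none of $a_1, \dots, a_k$ is itself a fixed point.

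Establishing that no term of the tower above the base is a fixed point is the crux of the argument, and it reduces to showing that $F_m$ is not a fixed point whenever $m \ge 6$, since each $a_i$ with $i \ge 1$ has the form $F_{a_{i-1}}$ with $a_{i-1} \ge 6$. By Marques' classification this means ruling out $F_m = 5^{t}$ and $F_m = 12 \cdot 5^{t}$. For the first, I suppose $F_m = 5^{t}$ with $m \ge 6$; then $t \ge 1$, and the criterion $5^{t}\mid F_m \iff 5^{t}\mid m$ (using $z(5^{t}) = 5^{t}$) gives $5^{t} \mid m$, so $m \ge 5^{t} = F_m > m$, a contradiction. For the second, if $F_m = 12 \cdot 5^{t}$, then $3 \mid F_m$ and $4 \mid F_m$ force $4 \mid m$ and $6 \mid m$ through $z(3) = 4$ and $z(4) = 6$, so $12 \mid m$ and hence $F_{12} = 144 \mid F_m$; but $144 = 2^{4}\cdot 3^{2}$ cannot divide $12 \cdot 5^{t} = 2^{2}\cdot 3 \cdot 5^{t}$, again a contradiction. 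Thus every $a_i$ with $i \ge 1$ fails to be a fixed point, and $a_k$ has fixed-point order exactly $k$.

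Finally, distinct fixed points yield distinct towers: if $a_k(p) = a_k(p')$, then applying $z^{k}$ gives $p = p'$. Since there are infinitely many fixed points $p \ge 6$, the integers $a_k(p)$ form an infinite family, each of fixed-point order $k$, which proves the theorem. The only delicate point is the non-fixedness of the intermediate Fibonacci values; everything else is bookkeeping with the two order-of-appearance identities above.
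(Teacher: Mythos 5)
Your proof is correct, but it takes a genuinely different route from the paper's. The paper argues \emph{forward}: Lemma \ref{10mgoestoink} gives the explicit formula $z^{k}(10^{m}) = 3\cdot 5^{m}\cdot 2^{m-2k}$, so $10^{2k+2}$ has fixed point order exactly $k$, and then Lemma \ref{coefconst} (invariance of the prime-to-$5$ part of the iterates under multiplication by $5^{r}$) turns that single example into the infinite family $5^{r}\cdot 10^{2k+2}$. You instead argue \emph{backward}, building a tower of preimages $a_{i} = F_{a_{i-1}}$ over each fixed point $p = 12\cdot 5^{j} \geq 6$ via the identity $z(F_{m}) = m$ for $m \geq 3$, and you obtain infinitude by varying the base fixed point $p$ rather than by multiplying a single example by powers of $5$. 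Your key identity $z(F_{m}) = m$ appears nowhere in the paper (though it can be recovered from Lemma \ref{ifndividesF_m} together with monotonicity of the Fibonacci sequence), and conversely you never need Lemma \ref{coefconst} or any computation with powers of $10$. Each approach buys something: the paper's Lemma \ref{coefconst} is a reusable tool that also drives Theorem \ref{infngotoeachFP}, and it shows that \emph{any} integer of fixed point order $k$ spawns infinitely many others; your towers give, as a bonus, that every fixed point $\geq 6$ is the terminus of integers of every prescribed fixed point order, which comes close to Theorem \ref{infngotoeachFP} by a completely independent mechanism. One simplification you missed: for $m \geq 6$ you already know $z(F_{m}) = m \neq F_{m}$ since $F_{m} > m$, so $F_{m}$ fails to be a fixed point \emph{by definition}; your entire third paragraph ruling out $F_{m} = 5^{t}$ and $F_{m} = 12\cdot 5^{t}$ through Marques' classification (Lemma \ref{allfixedpointsofform}) and the divisibility criterion is correct but unnecessary, and cutting it makes your argument shorter than the paper's.
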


\begin{theorem} \label{infngotoeachFP}
Infinitely many integers $n$ iterate to each fixed point of the form $12 \cdot 5^{k}$. 
\end{theorem}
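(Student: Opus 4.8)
The plan is to drive everything off the single identity $z(F_m)=m$, valid for all $m\ge 3$, which I would record (or invoke from Section~\ref{auxresults}) as the engine of the construction; it follows from the divisibility law $F_m\mid F_\ell \iff m\mid \ell$ together with the fact that a positive integer cannot divide a strictly smaller positive integer. The first thing to notice is that a \emph{bounded}-depth approach cannot possibly work: if $z(n)=12\cdot 5^k$ then $n\mid F_{12\cdot 5^k}$, so the one-step preimage of the fixed point lies in the finite set of divisors of $F_{12\cdot 5^k}$, and iterating this remark shows that for each fixed $j$ the set $\{n:z^{j}(n)=12\cdot 5^k\}$ is finite. Hence any infinite family must use iterates of unbounded depth, and a tower of Fibonacci indices is the natural device that produces them.

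Fix $k\ge 0$ and write $p=12\cdot 5^k$, which is a fixed point by Marques' theorem. I would define a sequence by $a_0=p$ and $a_{i+1}=F_{a_i}$ for $i\ge 0$. Since $a_i\ge p\ge 12\ge 3$ for every $i$, the identity $z(F_m)=m$ yields
\begin{equation}
z(a_{i+1})=z(F_{a_i})=a_i,
\end{equation}
so by induction $z^{i}(a_i)=a_0=p$, meaning every term $a_i$ reaches the fixed point $p$ under iteration of $z$. To see that the $a_i$ are genuinely infinitely many \emph{distinct} integers, I would use $F_m>m$ for all $m\ge 6$: since $a_0=12\cdot 5^k\ge 12$, the sequence is strictly increasing, $a_0<a_1<a_2<\cdots$, and therefore pairwise distinct.

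The one point that needs care, and the part I would treat as the main obstacle, is confirming that $a_i$ iterates to $12\cdot 5^k$ \emph{itself} rather than stalling at some other fixed point encountered earlier along the orbit. This is settled by the strict monotonicity above: the orbit of $a_i$ is exactly the strictly decreasing chain $a_i>a_{i-1}>\cdots>a_1>a_0=p$, and for each $j\ge 1$ we have $z(a_j)=a_{j-1}<a_j$, so no $a_j$ with $j\ge 1$ is a fixed point of $z$; the first (and only) fixed point met is $a_0=p$. This simultaneously pins down the fixed point order of $a_i$ as exactly $i$, giving a constructive companion to Theorem~\ref{infntakek} anchored to a prescribed fixed point. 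Running the argument for every $k\ge 0$ then produces the required infinite family for each fixed point of the form $12\cdot 5^k$.
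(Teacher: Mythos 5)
Your proof is correct, and it takes a genuinely different route from the paper's. The paper builds its infinite families from the integers $2^{a}\cdot 5^{k}$: it computes $z^{j}(2^{a})$ explicitly (Lemmas \ref{z^i(2^a)equals} and \ref{all2to12}) to show every power of $2$ reaches the fixed point $12$, then invokes the coefficient-constancy Lemma \ref{coefconst} together with a further induction (showing $z^{t}(2^{a}\cdot 5^{b}) = 2^{a'}\cdot 3\cdot 5^{b}$) to pin down that $2^{a}\cdot 5^{k}$ lands exactly on $12\cdot 5^{k}$ rather than on $12\cdot 5^{k'}$ for some other $k'$. You instead run everything off the single identity $z(F_{m})=m$ for $m\geq 3$, which is not stated in the paper but follows quickly from its Lemma \ref{ifndividesF_m} (with $n=F_{m}$ it gives $z(F_{m})\mid m$, and if $\ell \mid m$ with $F_{m}\mid F_{\ell}$ then $F_{\ell}=F_{m}$, forcing $\ell=m$ by strict monotonicity of the Fibonacci sequence on indices $\geq 3$), and you take the tower $a_{0}=12\cdot 5^{k}$, $a_{i+1}=F_{a_{i}}$. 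Your treatment of the one delicate point is right: since $z(a_{j})=a_{j-1}<a_{j}$ for $j\geq 1$, no intermediate term of the orbit is a fixed point, so each $a_{i}$ reaches $12\cdot 5^{k}$ itself, with fixed point order exactly $i$; this monotonicity argument replaces the paper's bookkeeping induction that the power of $5$ is carried along unchanged. As for what each approach buys: the paper's family consists of small, explicit integers with precise iteration counts (e.g., $\lceil a/2\rceil -1$ for $2^{a}$), and its Lemma \ref{coefconst} is reused to prove Theorem \ref{infntakek}; your construction is shorter and nearly self-contained, simultaneously produces, for every $i$, an integer of fixed point order exactly $i$ anchored at a prescribed fixed point (a constructive companion to Theorem \ref{infntakek}), and your opening observation that $\{n : z^{j}(n)=12\cdot 5^{k}\}$ is finite for each fixed $j$ (since $z(n)=m$ forces $n\mid F_{m}$) is a structural remark the paper does not make. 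The trade-offs are that your integers grow tower-exponentially, and that your construction as written stalls at the fixed points $1$ and $5$ (where $F_{p}=p$), though this is harmless for the theorem at hand since $12\cdot 5^{k}\geq 12$.
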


\begin{theorem}\label{allngotofp}
All positive integers $n$ have finite fixed point order. 
\end{theorem}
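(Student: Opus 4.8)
The plan is to exhibit a potential function that strictly decreases under $z$ until the orbit enters the set of $\{2,3,5\}$-smooth numbers, and then to analyze that set directly. Throughout I would use the standard structural facts about $z$ (which I expect to be among the auxiliary results of Section~\ref{auxresults}): that $z$ is ``$\lcm$-multiplicative,'' $z\!\left(\prod_i p_i^{e_i}\right)=\lcm_i z(p_i^{e_i})$; that $z(p^e)=p^{\max(0,\,e-e(p))}z(p)$ where $e(p)=\nu_p(F_{z(p)})\ge 1$; and that for a prime $p\ne 5$ one has $z(p)\mid p-\left(\tfrac{5}{p}\right)$, so $z(p)\mid p-1$ or $z(p)\mid p+1$, and in particular $z(p)\le p+1$.

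First I would prove a descent lemma for large primes. Let $P(n)$ denote the largest prime factor of $n$, and order the pair $(P(n),\nu_{P(n)}(n))$ lexicographically. Suppose $p:=P(n)\ge 7$. Since $z(p)\mid p\pm 1$ and $p\pm 1$ is even and exceeds $2$, every prime factor of $z(p)$ is at most $(p+1)/2<p$; moreover no prime $q<p$ dividing $n$ can contribute a factor of $p$ to $z(n)$, because such $q$ satisfies $z(q)\le q+1<p$. Combining these observations with the $\lcm$-multiplicativity and the prime-power formula shows that every prime factor of $z(n)$ is at most $p$ and that $\nu_p(z(n))=\max(0,\,e_p-e(p))<e_p$, where $e_p=\nu_p(n)$. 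Hence if $e_p\ge 2$ the largest prime is unchanged but its exponent strictly drops, while if $e_p=1$ the largest prime itself strictly drops; either way the lexicographic potential strictly decreases. By well-foundedness, after finitely many iterations the orbit reaches an integer $n'$ with $P(n')\le 5$, that is, $n'=2^a3^b5^c$.

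It remains to analyze the $\{2,3,5\}$-smooth numbers, which is where I expect the real work to lie. Using $z(5^c)=5^c$ together with the explicit (elementary) values of $z(2^a)$ and $z(3^b)$ — each a product of a power of $2$ and a power of $3$ — one checks that this set is $z$-invariant, that the $5$-adic valuation $c$ is preserved under every further iteration, and that the exponents $a,b$ of $2,3$ evolve independently of $c$. I would then track the pair $(a,b)$: the repeated computation $z(2^a3^b)=\lcm(z(2^a),z(3^b))$ shows that the exponent of $2$ is driven down to $2$ and the exponent of $3$ is driven to $1$, so $(a,b)$ converges to $(2,1)$ whenever $(a,b)\ne(0,0)$. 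Thus every $\{2,3,5\}$-smooth number either is already $5^c$ or iterates to $12\cdot 5^c$, both of which are fixed points by Marques' characterization. Concatenating the two stages shows every $n$ has finite fixed point order.

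The main obstacle is the base case of $\{2,3,5\}$-smooth numbers rather than the large-prime descent: it is precisely for the small primes that the ``largest prime factor decreases'' heuristic fails (for instance $z(2)=3$ \emph{raises} the largest prime), and it is here that the fixed points live, so one cannot avoid a careful, if elementary, bookkeeping of how the exponents of $2$ and $3$ migrate toward $12$. A secondary technical point to dispatch cleanly is the possibility of Wall--Sun--Sun primes: because $e(p)\ge 1$ always, the inequality $\nu_p(z(n))<\nu_p(n)$ driving the descent holds unconditionally, so the argument does not rest on any unproven conjecture.
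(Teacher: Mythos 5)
Your proposal is correct, but it takes a genuinely different route from the paper --- in fact it essentially reconstructs the earlier proof of Luca and Tron \cite{LT}, the very argument to which this paper advertises an alternative (``We provide an alternate proof using a minimal counterexample argument''). Your two phases --- a lexicographic descent on the pair $\left(P(n),\nu_{P(n)}(n)\right)$ showing that every orbit enters the $\{2,3,5\}$-smooth numbers, followed by explicit exponent bookkeeping showing $2^a3^b5^c$ iterates to $5^c$ or $12\cdot 5^c$ --- are exactly the structure the introduction attributes to \cite{LT}. The paper instead lets $n$ be the smallest integer whose fixed point order is undefined and derives a contradiction in three cases: if $n=ab$ with $\gcd(a,b)=1$ and $a,b\geq 2$, then $a,b<n$ have finite order and Lemma \ref{z^k(n)=lcm(z^k(a),z^k(b))} (that $z^k(ab)=\lcm\left(z^k(a),z^k(b)\right)$ for coprime $a,b$) finishes; if $n=p$ is prime, then $z(p)\leq p+1$ (Lemma \ref{z(p)leqp+1}) is either $<p$ (use minimality) or equal to $p+1$, which, after ruling out powers of $2$ via Lemma \ref{z^i(2^a)equals}, factors into coprime pieces $2^r$ and $t$ both less than $p$; if $n=p^e$ with $e\geq 2$, then $z(p^e)=p^rz(p)$ with $r<e$ (Lemma \ref{z(p^e)=z(p^r)z(p)}), $\gcd\left(p^r,z(p)\right)=1$ (Lemma \ref{z(p)relprimetop}), and both factors are smaller than $n$, so the coprime lemma applies again. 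As for trade-offs: the paper's induction is shorter, stays entirely within its stated auxiliary lemmas, and needs no analysis of the smooth core beyond its powers-of-$2$ lemma; your descent is dynamically more informative --- it shows large primes are transient, localizes all interesting behavior in the $\{2,3,5\}$-smooth core, and could in principle be made quantitative --- at the cost of the bookkeeping you acknowledge (which is elementary and checks out: the pair of exponents of $2$ and $3$ does converge to $(2,1)$, with the $5$-adic valuation frozen), plus two small repairs. First, the divisibility $z(p)\mid p\pm 1$ is not among the paper's auxiliary results; your descent only needs that every prime factor of $z(p)$ is less than $p$, which already follows from $z(p)\leq p+1$, $\gcd\left(z(p),p\right)=1$, and the fact that $p+1$ is even for odd $p$. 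Second, the formula $z(q^e)=q^{\max(0,e-e(q))}z(q)$ is stated in Lemma \ref{z(p^e)=z(p^r)z(p)} only for odd primes, so for $q=2$ you must quote Lemma \ref{pow2equals} instead, which yields the same conclusion. Your dismissal of the Wall--Sun--Sun issue (since $e(p)\geq 1$ unconditionally) is correct.
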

Theorem \ref{allngotofp} was proved first in \cite{LT} by showing that within finite $k$, $z^k(n) = 2^a3^b5^c$ where $a,b,c \in \Z_{\geq 0}$ and then proving that $2^a3^b5^c$ iterates to a fixed point in a finite number of steps. It was later proved in \cite{Ta1} using a relationship between the Pisano period of $n$ and $z(n)$. We provide an alternate proof using a minimal counterexample argument. 

\begin{table}[htb]
\centering
\begin{tabular}{|c|c|c|c|c|c|} \hline
$ n \setminus k $ & 1  & 2  & 3  & 4  & 5   \\ \hline
1        & \textbf{1}  &    &    &    &     \\ 
2        & 3  & 4  & 6  & \textbf{12} &     \\ 
3        & 4  & 6  & \textbf{12} &    &     \\ 
4        & 6  & \textbf{12} &    &    &     \\ 
5        & 5  &    &    &    &     \\ 
6        & \textbf{12} &    &    &    &     \\ 
7        & 8  & 6  & \textbf{12} &    &     \\ 
8        & 6  & \textbf{12} &    &    &     \\ 
9        & \textbf{12} &    &    &    &     \\ 
10       & 15 & 20 & 30 & \textbf{60} &     \\ 
11       & 10 & 15 & 20 & 30 & \textbf{60}  \\ 
12       & \textbf{12} &    &    &    &     \\ \hline
\end{tabular}
\caption{Iterations of $ z $ on $ n $, numbers in bold are fixed points.}
\label{table:zkiterations}
\end{table} 


\section{Auxiliary Results} \label{auxresults}

Here we include some needed results from previous papers. 

\begin{lemma}\label{allfixedpointsofform}
Let $ n $ be a positive integer. 
Then $ z(n) = n $ if and only if $ n = 5^{k}$ or $ n = 12 \cdot 5^{k} $ for some $ k \geq 0 $.
\end{lemma}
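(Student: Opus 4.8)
The plan is to prove both implications, treating the forward (construction) direction as a direct computation and devoting the bulk of the work to the converse. Throughout I will use three standard facts about the order of appearance, each available from the auxiliary results: (i) if $n = \prod_i p_i^{a_i}$ then $z(n) = \lcm_i z(p_i^{a_i})$; (ii) for a prime $p \ne 5$ one has $z(p) \mid p - \left(\frac{p}{5}\right)$, so in particular $p \nmid z(p)$ and $z(p) \mid p \pm 1$, while $z(5) = 5$; and (iii) the $p$-adic dichotomy $v_p\!\left(z(p^{k})\right) \le k-1$ for every prime $p \ne 5$, whereas $z(5^{k}) = 5^{k}$ so that $v_5\!\left(z(5^{k})\right) = k$. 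For the easy direction, (iii) gives $z(5^k) = 5^k$ immediately, and (i) gives $z(12 \cdot 5^k) = \lcm(z(4), z(3), z(5^k)) = \lcm(6,4,5^k) = 12 \cdot 5^k$, so every such $n$ is a fixed point.

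For the converse, suppose $z(n) = n$ and write $n = \prod_i p_i^{a_i}$. Comparing $p$-adic valuations through (i) yields $v_p(n) = \max_i v_p\!\left(z(p_i^{a_i})\right)$ for each prime $p$. The key step is to bound the largest prime factor $P$ of $n$, say $P^{a} \| n$ with $a \ge 1$. By (iii) the self-contribution $v_P\!\left(z(P^{a})\right) \le a-1$ is too small to account for $v_P(n) = a$, so some other prime $q \mid n$, $q \ne P$, must satisfy $v_P\!\left(z(q)\right) \ge a \ge 1$; by (ii) this forces $P \mid z(q) \mid q \pm 1$. Since $q < P$, the only possibility is $q + 1 = P$, i.e.\ $q = P-1$ is prime. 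For $P \ge 7$ this is impossible, because $P$ is odd and $P - 1 \ge 6$ is even, hence composite. Therefore no prime exceeding $5$ divides $n$, and we may write $n = 2^{b} 3^{c} 5^{d}$.

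It remains to determine which exponents are admissible, using the explicit small values $z(3^{c}) = 4 \cdot 3^{c-1}$, $z(2) = 3$, $z(4) = 6$, $z(2^{k}) = 3 \cdot 2^{k-2}$ for $k \ge 3$, and $z(5^{d}) = 5^{d}$. The equation at $p = 5$ holds automatically since only the factor $5^d$ contributes a power of $5$. At $p = 3$, the maximum over the factors gives $c = \max\!\left(v_3(z(2^b)),\, c-1,\, 0\right)$, where $v_3(z(2^b)) = 1$ when $b \ge 1$; this forces $c = 1$ with $b \ge 1$, or else $c = 0$ and $b = 0$. At $p = 2$, since $v_2(z(2^b)) \le b - 1 < b$ and $v_2(z(3^c)) = 2$ for $c \ge 1$, the equation $b = \max(\,\cdot\,)$ forces $b = 2$ in the branch $c = 1$. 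Hence $(b,c) \in \{(0,0),(2,1)\}$ with $d \ge 0$ arbitrary, which is precisely $n = 5^{d}$ or $n = 12 \cdot 5^{d}$. I expect the main obstacle to be establishing the valuation input (iii) rigorously, particularly the anomalous $2$-adic behavior of $z(2^k)$ and the guarantee that the power of a prime $p$ in $z(n)$ cannot be manufactured by the $z$-value of a strictly larger prime; once (iii) is in hand, the largest-prime bound and the finite check are routine.
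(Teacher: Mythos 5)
Your proof is correct, but there is nothing in the paper to compare it against line by line: the paper does not prove Lemma \ref{allfixedpointsofform} at all, it quotes the result from \cite{M1, SM}. So the relevant question is whether your blind argument is sound and whether its inputs are available, and the answer to both is yes --- everything you invoke is in the paper's Section \ref{auxresults}. Your fact (i) is Lemma \ref{z(n)=lcm(p^e's)}. Your fact (ii) is stronger than needed: the largest-prime step only uses $P \mid z(q)$ together with $z(q) \le q+1$ (Lemma \ref{z(p)leqp+1}), which already gives $q < P \le z(q) \le q+1$, hence $P = q+1$, impossible for $P \ge 7$ since then $q = P-1$ would be an even number at least $6$ and so not prime. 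Your fact (iii) follows, for odd $p \ne 5$, from Lemma \ref{z(p^e)=z(p^r)z(p)} combined with Lemma \ref{z(p)relprimetop}; for $p = 2$, from Lemma \ref{pow2equals} plus the values $z(2)=3$, $z(4)=6$; and the identity $z(5^k)=5^k$ is Lemma \ref{z(p^e)=z(p^r)z(p)} applied to $p=5$, where $a=1$ because $F_5 = 5$. The one presentational gap you should patch: the lcm in (i) runs over prime \emph{powers}, so the valuation-deficit argument must produce a prime power $q^{e} \,\|\, n$ with $v_P\left(z(q^{e})\right) \ge 1$, not merely a prime $q$ with $v_P\left(z(q)\right) \ge 1$; this reduces to the prime case via $z(q^{e}) = q^{r} z(q)$ (Lemma \ref{z(p^e)=z(p^r)z(p)}) when $q$ is odd and $\ne 5$, while $q = 2$ and $q = 5$ are harmless because $z(2^{e})$ and $z(5^{e})$ have no prime factor exceeding $5$ by the explicit formulas. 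With that patch, your finite check of the exponents at $p = 2, 3, 5$ is exactly right (including the forcing $c\in\{0,1\}$, then $b=0$ or $b=2$), the forward direction is the one-line lcm computation you give, and the whole argument is self-contained; it could serve as an in-paper proof of the lemma rather than a citation, and it is in the same spirit as Marques's original treatment (first reduce to $n = 2^{b}3^{c}5^{d}$, then pin down the exponents).
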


A proof of Lemma \ref{allfixedpointsofform} can be found in \cite{M1, SM}. 

\begin{lemma}\label{pow2equals}
For all $ a \in \Z, a \geq 3 $, $ z\left(2^{a}\right) = 2^{a-2} \cdot 3 $. For all $b \in \Z^{+}, z(3^b) = 4 \cdot 3^{b-1}$.
\end{lemma}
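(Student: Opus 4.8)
The plan is to identify $z(p^k)$ with the least index $\ell$ at which $F_\ell$ is divisible by $p^k$, equivalently the least $\ell$ at which the $p$-adic valuation $\nu_p(F_\ell)$ first reaches $k$. The two ingredients I would use are the standard divisibility law $n \mid F_m \iff z(n)\mid m$ (so that, taking $m=z(2^a)$, one gets $z(2^{a-1})\mid z(2^a)$, and likewise for powers of $3$), together with explicit control of how $\nu_p(F_\ell)$ grows along the indices $\ell$ that are multiples of $z(p)$.

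For the powers of $3$ I would argue by induction on $b$, with base case $z(3)=4$ read off from $F_4=3$. The engine is the tripling identity $F_{3n}=F_n\bigl(5F_n^2+3(-1)^n\bigr)$: when $3\mid F_n$ the factor $5F_n^2$ has $3$-valuation at least $2$ while $3(-1)^n$ has valuation exactly $1$, so $\nu_3(F_{3n})=\nu_3(F_n)+1$. Combined with $F_{2n}=F_nL_n$ and the elementary fact that $3\nmid L_n$ when $4\mid n$ (so doubling the index does not raise the $3$-valuation in this regime), this shows that, starting from $z(3^b)=4\cdot 3^{b-1}$, among the multiples $m\cdot z(3^b)$ the cases $m=1,2$ give valuation only $b$ while $m=3$ gives valuation $b+1$; hence the least qualifying index is $3\cdot z(3^b)=4\cdot 3^{b}$, completing the induction.

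For the powers of $2$ the same scheme applies but the base behaviour is exceptional. Here $z(2)=3$ and $z(4)=6$, and one checks directly that $z(8)=6$ from $F_6=8$, giving the base case $a=3$. The inductive step uses $F_{2n}=F_nL_n$ together with the fact that $\nu_2(L_n)=1$ whenever $6\mid n$; since $z(2^a)=2^{a-2}\cdot 3$ is a multiple of $6$ for every $a\ge 3$, doubling the index raises the $2$-valuation by exactly one, so the first multiple of $z(2^a)$ whose valuation reaches $a+1$ is $2\cdot z(2^a)$. The divisibility law guarantees every index of valuation $\ge a+1$ is a multiple of $z(2^a)$, which pins the least index down and yields $z(2^{a+1})=2\cdot z(2^a)=2^{a-1}\cdot 3$.

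The hard part will be the valuation bookkeeping, and in particular the anomalous jump for $p=2$: $\nu_2(F_\ell)$ equals $1$ on odd multiples of $3$ but already equals $3$ at $\ell=6$, so the naive lifting-the-exponent pattern $\nu_2(F_\ell)=\nu_2(\ell)+\nu_2(F_{z(2)})$ fails and must be replaced by the regime $\nu_2(F_\ell)=\nu_2(\ell)+2$ valid once $12\mid\ell$. I would handle this either by proving the needed Lucas-valuation facts for $\nu_2(L_n)$ and $\nu_3(L_n)$ from the identity $L_n^2=5F_n^2+4(-1)^n$ together with short residue computations modulo small powers of $2$ and $3$, or simply by invoking Lengyel's closed-form formulas for $\nu_2(F_\ell)$ and $\nu_3(F_\ell)$, from which both claimed values of $z$ follow immediately as least-index computations.
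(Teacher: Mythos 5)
Your proposal is mathematically sound, but it is genuinely different from what the paper does, because the paper does not prove this lemma at all: it simply cites Theorem 1.1 of Marques \cite{M2}. Your argument is a self-contained, lifting-the-exponent-style proof: induction on the exponent, using the divisibility law $n \mid F_m \iff z(n) \mid m$ to restrict the candidate indices for $z(p^{k+1})$ to multiples of $z(p^k)$, and then the identities $F_{2n} = F_n L_n$ and $F_{3n} = F_n\left(5F_n^2 + 3(-1)^n\right)$, together with the Lucas valuation facts ($\nu_2(L_n) = 1$ when $6 \mid n$, and $3 \nmid L_n$ when $4 \mid n$), to track valuations exactly along those multiples. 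I checked the key ingredients: the tripling identity and its valuation consequence are correct, the base cases ($z(8)=6$ with $\nu_2(F_6)=3$ exactly; $z(3)=4$ with $\nu_3(F_4)=1$ exactly) are correct, and your handling of the $p=2$ anomaly --- starting the induction at $a=3$ instead of trying to lift from $z(2)=3$ --- is precisely the care this case needs; the deferred Lucas facts do follow from $L_n^2 = 5F_n^2 + 4(-1)^n$ as you indicate. One point to make explicit in a written-up version: the induction must carry the \emph{exact} valuation $\nu_p\left(F_{z(p^k)}\right) = k$ as part of the inductive hypothesis, since otherwise the claim that the multiple $m=1$ ``gives valuation only $b$'' is unjustified; your sketch secures this implicitly (each doubling/tripling changes the valuation by an exactly computed amount, starting from an exact base case), but it should be stated. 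As for what each approach buys: the paper's citation keeps the exposition short and leans on the literature, while your argument makes the result self-contained and exposes the mechanism --- the interplay of $z(p)$, $\nu_p\left(F_{z(p)}\right)$, and index doubling/tripling --- that explains why the formula for powers of $2$ carries the factor $2^{a-2}$ rather than the naive $2^{a-1}$.
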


Lemma \ref{pow2equals} is Theorem 1.1 of \cite{M2}. 

\begin{lemma}\label{z(n)=lcm(p^e's)}
Let $ n \geq 2 $ be an integer with prime factorization $ n = p_{1}^{e_{1}}p_{2}^{e_{2}} \cdots p_{m}^{e_{m}} $ where $ p_{1}, p_{2}, \ldots, p_{m} $ are prime and $e_{1}, e_{2}, \ldots, e_{m} $ are positive integers. 
Then 
\begin{equation}
z(n)  \ = \  z(p_{1}^{e_{1}}p_{2}^{e_{2}}\cdots p_{m}^{e_{m}})  \ = \  \lcm\left( z(p_{1}^{e_{1}}), z(p_{2}^{e_{2}}), \ldots, z(p_{m}^{e_{m}})\right).
\end{equation}
\end{lemma}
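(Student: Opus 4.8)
The plan is to reduce the lemma to a single divisibility characterization of the set of indices at which $n$ divides a Fibonacci number. Concretely, I would first establish the key auxiliary fact that for any positive integer $n$ and any index $j$, one has $n \mid F_j$ if and only if $z(n) \mid j$; in other words, the set of indices at which $n$ divides the corresponding Fibonacci number is exactly the set of multiples of $z(n)$. Granting this, the lemma follows immediately: since the prime powers $p_1^{e_1}, \ldots, p_m^{e_m}$ are pairwise coprime, $n \mid F_j$ holds if and only if $p_i^{e_i} \mid F_j$ for every $i$, which by the auxiliary fact holds exactly when $z(p_i^{e_i}) \mid j$ for every $i$, and this in turn is equivalent to $\lcm(z(p_1^{e_1}), \ldots, z(p_m^{e_m})) \mid j$. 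Taking the smallest positive $j$ satisfying each side then yields $z(n) = \lcm(z(p_1^{e_1}), \ldots, z(p_m^{e_m}))$.

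The heart of the argument is thus the auxiliary fact, which I would prove using the classical GCD identity $\gcd(F_a, F_b) = F_{\gcd(a,b)}$. For the easy direction, if $z(n) \mid j$ then $F_{z(n)} \mid F_j$ (Fibonacci numbers at multiples of a given index are divisible by the Fibonacci number at that index, a consequence of the GCD identity), and since $n \mid F_{z(n)}$ by the definition of $z$, transitivity gives $n \mid F_j$. For the reverse direction, suppose $n \mid F_j$. Then $n$ divides both $F_j$ and $F_{z(n)}$, so $n \mid \gcd(F_j, F_{z(n)}) = F_{\gcd(j,\, z(n))}$. By the minimality of $z(n)$ this forces $z(n) \leq \gcd(j, z(n))$, and since $\gcd(j, z(n)) \leq z(n)$ always holds, we conclude $\gcd(j, z(n)) = z(n)$, that is, $z(n) \mid j$.

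I expect the main obstacle to be entirely concentrated in the GCD identity $\gcd(F_a, F_b) = F_{\gcd(a,b)}$, together with its corollary that $a \mid b$ implies $F_a \mid F_b$. These are standard, so I would either cite them from the literature already referenced in this section or supply a short Euclidean-algorithm style induction driven by the addition formula $F_{a+b} = F_a F_{b+1} + F_{a-1} F_b$, which reproduces the steps of computing $\gcd(a,b)$. Once that identity is available, the coprimality step is simply the Chinese Remainder Theorem applied to the congruence $F_j \equiv 0$, using that the $p_i^{e_i}$ are pairwise coprime, and the passage from ``both index sets are the multiples of a fixed integer'' to ``the two generating integers coincide'' is routine; hence no further difficulty is anticipated.
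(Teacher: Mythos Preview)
Your argument is correct. The paper does not supply its own proof of this lemma; it simply cites Theorem~3.3 of Renault's thesis~\cite{R}. Your route via the characterization ``$n\mid F_j$ if and only if $z(n)\mid j$'' together with the strong divisibility identity $\gcd(F_a,F_b)=F_{\gcd(a,b)}$ is exactly the standard one, and is almost certainly what Renault does as well. Note that the ``only if'' direction of your auxiliary fact is precisely the paper's Lemma~\ref{ifndividesF_m}, so within this paper you could cite that directly rather than reproving it; the ``if'' direction and the lcm step are then short and your exposition of them is fine.
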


A proof of Lemma \ref{z(n)=lcm(p^e's)} can be found in Theorem $ 3.3 $ of \cite{R}. 
Lemma \ref{z(n)=lcm(p^e's)} has been generalized as follows.

\begin{lemma}\label{general z(n)=lcm(p^e's)}
Let $ n \geq 2 $ be an integer with prime factorization $ n = p_{1}^{e_{1}}p_{2}^{e_{2}} \cdots p_{m}^{e_{m}} $ where $ p_{1}, p_{2}, \ldots, p_{m} $ are prime and $e_{1}, e_{2}, \ldots, e_{m} $ are positive integers. 
Then 
\begin{equation}
z \left(\lcm( m_{1},m_{2},\ldots, m_{n}) \right) 
 \ = \  \lcm \left(z(m_{1}), z(m_{2}), \ldots, z(m_{n})\right).
\end{equation}
\end{lemma}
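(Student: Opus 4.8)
The plan is to show that after applying the order-of-appearance map, both sides acquire the same prime-power decomposition, so that the identity reduces to Lemma \ref{z(n)=lcm(p^e's)} together with a monotonicity property of $z$. First I would reduce to the two-term case $z(\lcm(a,b)) = \lcm(z(a),z(b))$ by induction on the number of arguments, using that $\lcm$ is associative; the general statement then follows by peeling off one $m_i$ at a time. (One could equally argue the $n$-term case directly, since for each prime $p$ the exponent of $p$ in $\lcm(m_1,\ldots,m_n)$ is $\max_i v_p(m_i)$.)

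For the two-term case, I would record $a$ and $b$ through their $p$-adic valuations: for each prime $p$ set $\alpha_p = v_p(a)$ and $\beta_p = v_p(b)$, so that $v_p(\lcm(a,b)) = \max(\alpha_p,\beta_p)$. Applying Lemma \ref{z(n)=lcm(p^e's)} to each of $a$, $b$, and $\lcm(a,b)$ rewrites all three $z$-values as least common multiples of the prime-power quantities $z(p^{\alpha_p})$, $z(p^{\beta_p})$, and $z(p^{\max(\alpha_p,\beta_p)})$. Since $\lcm$ is commutative and associative, it then suffices to check the identity one prime at a time, that is, to verify $\lcm(z(p^{\alpha_p}), z(p^{\beta_p})) = z(p^{\max(\alpha_p,\beta_p)})$ for every $p$.

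The heart of the argument, and the step I expect to be the main obstacle, is the monotonicity of $z$ along divisibility: if $d \mid N$, then $z(d) \mid z(N)$. Granting this, whenever (say) $\alpha_p \geq \beta_p$ we have $p^{\beta_p} \mid p^{\alpha_p}$, hence $z(p^{\beta_p}) \mid z(p^{\alpha_p})$, so the two prime powers form a chain under divisibility and their $\lcm$ is just the larger term $z(p^{\alpha_p}) = z(p^{\max(\alpha_p,\beta_p)})$. This settles the per-prime identity and therefore the whole lemma. To establish the monotonicity property itself I would invoke the standard characterization $N \mid F_k \iff z(N) \mid k$: from $d \mid N \mid F_{z(N)}$ we obtain $d \mid F_{z(N)}$, and applying the characterization to $d$ forces $z(d) \mid z(N)$. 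The only care required is for the degenerate cases in which some $m_i = 1$, so that $z(m_i) = 1$, or in which a prime divides only some of the arguments; both are absorbed harmlessly by the $\max$ over exponents and by taking $\lcm$ with $1$.
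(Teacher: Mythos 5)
Your proof is correct, and since the paper offers no internal proof of this lemma at all---it simply cites Lemma 4 of the reference [Ty]---the natural comparison is with the standard argument used there. That argument is shorter and avoids prime factorizations entirely: since $m \mid F_t$ if and only if $z(m) \mid t$, one has $\lcm(m_1,\ldots,m_n) \mid F_t$ iff every $m_i \mid F_t$, iff every $z(m_i) \mid t$, iff $\lcm\left(z(m_1),\ldots,z(m_n)\right) \mid t$; taking the least such $t$ gives the identity in one stroke. Your route---reduce to two arguments by associativity, expand $z(a)$, $z(b)$, and $z(\lcm(a,b))$ via Lemma \ref{z(n)=lcm(p^e's)}, and settle each prime separately using the monotonicity $d \mid N \Rightarrow z(d) \mid z(N)$---is longer, but it has a concrete advantage relative to this paper's toolkit: your monotonicity step needs only the forward implication $n \mid F_m \Rightarrow z(n) \mid m$, which is exactly Lemma \ref{ifndividesF_m}, whereas the direct proof also requires the converse ($z(m) \mid t \Rightarrow m \mid F_t$), which rests on the Fibonacci divisibility property $a \mid b \Rightarrow F_a \mid F_b$, a fact the paper never states among its auxiliary results. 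Your handling of the degenerate cases (some $m_i = 1$, or a prime dividing only one of the two arguments, where $z(p^0)=1$ enters the $\lcm$ harmlessly) is also correct, and those are indeed the only places where care is needed; in particular you are right to ignore the vestigial hypothesis about the prime factorization of $n$ in the lemma's statement, which plays no role in the claimed identity.
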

A proof of Lemma \ref{general z(n)=lcm(p^e's)} can be found in Lemma $ 4 $ of \cite{Ty}.

\begin{lemma}\label{z(p)leqp+1}
   For all primes $p$, $z(p)\leq p+1$. 
\end{lemma}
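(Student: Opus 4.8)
The plan is to reduce the Fibonacci recurrence modulo $p$ and exploit the closed form coming from the characteristic polynomial $x^2 - x - 1$, whose roots are $\alpha = (1+\sqrt{5})/2$ and $\beta = (1-\sqrt{5})/2$. Since the essential feature of these roots is whether $5$ is a square modulo $p$, I would split into cases according to the Legendre symbol $\left(\tfrac{5}{p}\right)$, which by quadratic reciprocity is governed only by $p \bmod 5$. The two exceptional primes $p = 2$ and $p = 5$, where $\sqrt{5}$ degenerates, I would dispatch by hand: $F_{3} = 2$ gives $z(2) = 3 = 2+1$, and $F_{5} = 5$ gives $z(5) = 5 \le 6$.

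For $p \notin \{2,5\}$ the element $\sqrt{5}$ makes sense either in $\mathbb{F}_p$ or in $\mathbb{F}_{p^2}$, and $\alpha - \beta = \sqrt{5} \ne 0$, so Binet's formula $F_n = (\alpha^n - \beta^n)/(\alpha - \beta)$ holds in the appropriate field. If $\left(\tfrac{5}{p}\right) = 1$, then $\alpha, \beta \in \mathbb{F}_p^{\times}$ (they are nonzero since $\alpha\beta = -1$), and Fermat's little theorem gives $\alpha^{p-1} = \beta^{p-1} = 1$, whence $F_{p-1} \equiv 0 \pmod p$ and $z(p) \le p-1 < p+1$. If $\left(\tfrac{5}{p}\right) = -1$, then $x^2 - x - 1$ is irreducible over $\mathbb{F}_p$ and the Frobenius automorphism $x \mapsto x^p$ interchanges the two roots, so $\alpha^p = \beta$ and $\beta^p = \alpha$; substituting into Binet gives $\alpha^{p+1} = \beta^{p+1} = \alpha\beta = -1$ and hence $F_{p+1} \equiv 0 \pmod p$, so $z(p) \le p+1$.

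To convert these divisibilities into the desired bound, I would invoke the standard fact that $p \mid F_m$ if and only if $z(p) \mid m$ (a consequence of the identity $\gcd(F_m, F_n) = F_{\gcd(m,n)}$); this immediately upgrades $p \mid F_{p\mp 1}$ to $z(p) \mid (p \mp 1)$ and therefore $z(p) \le p+1$ in all cases. The main obstacle is not any single computation but setting up the characteristic-$p$ algebra cleanly: I must verify that $\sqrt{5}$ is a genuine element of $\mathbb{F}_p$ or $\mathbb{F}_{p^2}$ (which needs $p \ne 5$ so the discriminant is nonzero), that division by $\alpha - \beta$ is legitimate, and that Frobenius really does swap $\alpha$ and $\beta$ in the inert case. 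Once these finite-field preliminaries are in place, each case reduces to a one-line application of Fermat's little theorem or of Frobenius.
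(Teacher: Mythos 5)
Your proof is correct, but note that the paper does not actually prove this lemma: it only cites Lemma 2.3 of Marques \cite{M1}, so there is no internal argument to compare against. Your finite-field argument --- dispatching $p \in \{2,5\}$ by hand, then splitting on the Legendre symbol $\left(\tfrac{5}{p}\right)$, using Fermat's little theorem when $x^2-x-1$ splits over $\mathbb{F}_p$ and the Frobenius swap $\alpha^p = \beta$ when it is inert --- is precisely the classical proof that $p \mid F_{p - \left(\frac{5}{p}\right)}$ for $p \neq 5$, which is the substance of the result the paper cites; so you have reconstructed the standard argument rather than found a genuinely new route, and in doing so made the statement self-contained where the paper opted for brevity. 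One small simplification: your final appeal to the fact that $p \mid F_m$ if and only if $z(p) \mid m$ is unnecessary. Since $z(p)$ is by definition the \emph{smallest} positive $\ell$ with $p \mid F_\ell$, the divisibility $p \mid F_{p \mp 1}$ already yields $z(p) \leq p+1$ directly, with no need for the $\gcd$ identity.
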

A proof of Lemma \ref{z(p)leqp+1} can be found in Lemma 2.3 of \cite{M1}. 

\begin{lemma}\label{z(n)leq2n}
For all positive integers $n$, $z(n) \leq 2n$, with equality if and only if $n=6\cdot 5^k$ for some $k \in \Z_{\geq 0}$
\end{lemma}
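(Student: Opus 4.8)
The inequality $z(n)\le 2n$ is already available from \cite{S}, so the content I must establish is the characterization of equality. The reverse implication is a direct computation: for $n=6\cdot 5^{k}$, Lemma~\ref{z(n)=lcm(p^e's)} together with Lemma~\ref{allfixedpointsofform} gives $z(n)=\lcm(z(2),z(3),z(5^{k}))=\lcm(3,4,5^{k})=12\cdot 5^{k}=2n$, so every such $n$ attains equality. The whole difficulty lies in the forward implication: assuming $z(n)=2n$, I must force $n$ into the family $6\cdot 5^{k}$.

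The plan is to work prime by prime through the $p$-adic valuations of the identity $z(n)=\lcm_{q\mid n} z(q^{\,v_q(n)})=2n$ supplied by Lemma~\ref{z(n)=lcm(p^e's)}, writing $v_p$ for the $p$-adic valuation. The main step is to show that no prime $p\ge 7$ can divide $n$. Suppose otherwise and let $p$ be the largest prime factor of $n$, with $e=v_p(n)\ge 1$. Comparing $p$-adic valuations in $z(n)=2n$ forces $v_p(z(n))=e$, and since $v_p(z(n))=\max_{q\mid n}v_p\!\left(z(q^{\,v_q(n)})\right)$, some prime power $q^{\,v_q(n)}\mid n$ must satisfy $v_p\!\left(z(q^{\,v_q(n)})\right)\ge e$. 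Using the relation $v_p(z(p^{e}))\le e-1$ (since $p\nmid z(p)$, the value $z(p^{e})$ is never a multiple of $p^{e}$), the contributing prime $q$ must differ from $p$; then $v_p\!\left(z(q^{\,v_q(n)})\right)=v_p(z(q))$, so $p^{e}\mid z(q)$, and Lemma~\ref{z(p)leqp+1} yields $q+1\ge z(q)\ge p^{e}\ge p$. This is impossible: if $q<7$ then $q+1\le 6<p$, while if $q\ge 7$ then $q\le p$ and $q\ge p-1$ force $q=p-1$, an even number exceeding $2$, hence not prime. This contradiction is the crux of the whole argument, and I expect verifying the valuation bound $v_p(z(p^{e}))\le e-1$ cleanly (rather than circularly invoking the $2n$ bound) to be the most delicate point.

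Once large primes are excluded, $n=2^{a}3^{b}5^{c}$ and the problem becomes a finite valuation check. Since $z(5^{c})=5^{c}$ is coprime to both $z(2^{a})$ and $z(3^{b})$, the factor $5^{c}$ splits off and the equation $z(n)=2n$ reduces to $\lcm(z(2^{a}),z(3^{b}))=2^{a+1}3^{b}$. I then read off the exponents of $2$ and $3$ using Lemma~\ref{pow2equals}: matching the power of $3$ forces $b=1$ (for $b\ge 2$ the left side has $v_3=b-1\ne b$, and $b=0$ cannot produce the required factor of $3$), and with $b=1$ the term $z(3)=4$ contributes $2^{2}$, which pins $v_2$ of the left side so that only $a=1$ yields the needed exponent $a+1$. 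Hence $a=b=1$, giving $n=6\cdot 5^{c}$ and completing the characterization.
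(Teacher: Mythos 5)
Your proof is correct in substance, but it follows a genuinely different route from the paper: the paper does not prove this lemma at all, quoting both the bound and the equality characterization from Sall\'e \cite{S}. You rebuild the equality case self-containedly from the paper's other auxiliary lemmas, and the core of your argument checks out: $v_p(z(p^e)) \le e-1$ follows non-circularly from Lemmas \ref{z(p^e)=z(p^r)z(p)} and \ref{z(p)relprimetop}, so if $p \ge 7$ is the largest prime factor of $n$ and $z(n)=2n$, the needed $p$-valuation must come from some prime $q \ne p$ dividing $n$, whence $p \le p^e \le z(q) \le q+1$ by Lemma \ref{z(p)leqp+1}; combined with $q < p$ this forces $q = p-1$, an even number exceeding $2$, a contradiction. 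The finite check over $n = 2^a 3^b 5^c$ is also correct and pins down $a = b = 1$. The trade-off between the two treatments: yours makes the result verifiable entirely within the paper's toolkit, while the citation to \cite{S} is shorter and rests on a theorem proved for general recursive sequences. Two minor repairs you should make: (i) your step $v_p(z(q^{v_q(n)})) = v_p(z(q))$ invokes Lemma \ref{z(p^e)=z(p^r)z(p)}, which is stated only for odd primes, so $q = 2$ needs a one-line separate treatment ($z(2^j)$ equals $3$, $6$, or $3 \cdot 2^{j-2}$, so its $p$-valuation is $0$ for every $p \ge 7$); (ii) Lemma \ref{pow2equals} gives $z(2^a)$ only for $a \ge 3$, so the terminal computation must use the directly verified values $z(2) = 3$ and $z(4) = 6$ --- with these your exponent analysis still goes through, e.g.\ $a = 2$ fails since $\lcm(z(4), z(3)) = \lcm(6,4) = 12 \ne 2^3 \cdot 3$.
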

Lemma \ref{z(n)leq2n} is proven in \cite{S}. 


\begin{lemma}\label{z(p)relprimetop}
For all primes $ p \neq 5 $, we have that $ \gcd\left(p, z(p)\right) =  1 $.
\end{lemma}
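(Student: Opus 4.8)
The plan is to leverage the upper bound on $z(p)$ together with the classification of fixed points, rather than to argue directly about Fibonacci divisibility. Since $p$ is prime, $\gcd(p, z(p))$ can only equal $1$ or $p$, and it equals $p$ precisely when $p \mid z(p)$. So it suffices to rule out the possibility that $p$ divides $z(p)$.

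First I would invoke Lemma \ref{z(p)leqp+1}, which gives $z(p) \leq p+1$, and combine it with the trivial lower bound $z(p) \geq 1$. The key observation is that the only positive multiple of $p$ in the interval $[1, p+1]$ is $p$ itself, since $2p > p+1$ for every prime $p$ (indeed for every $p \geq 2$). Consequently, if $p \mid z(p)$ were to hold, we would be forced to conclude $z(p) = p$; that is, $p$ would itself be a fixed point of $z$.

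Next I would apply Lemma \ref{allfixedpointsofform}, which characterizes every fixed point as being of the form $5^k$ or $12 \cdot 5^k$ for some $k \geq 0$. Among these, the only prime is $5$: no number of the form $12 \cdot 5^k$ is prime (each is divisible by $12$), and $5^k$ is prime only when $k = 1$. Since we assume $p \neq 5$, the integer $p$ cannot be a fixed point, so the hypothesis $p \mid z(p)$ is untenable. Therefore $p \nmid z(p)$, which forces $\gcd(p, z(p)) = 1$, completing the argument.

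I expect no serious obstacle here, as the result follows almost immediately once the correct ingredients are assembled; the entire content lies in pairing the bound $z(p) \leq p+1$ with the fixed-point classification. The only step demanding a moment of care is verifying that $p$ is the unique multiple of $p$ in $[1, p+1]$, which rests simply on $p \geq 2$ so that $2p$ already exceeds the range.
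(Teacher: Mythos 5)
Your proof is correct, but it takes a genuinely different route from the paper, which offers no argument of its own and simply defers to Lemma 2.3 of \cite{M1}. In that source the coprimality comes directly from Fibonacci arithmetic: for a prime $p \neq 5$ one has $p \mid F_{p - \left(\frac{5}{p}\right)}$ (the law of apparition), so $z(p)$ divides $p \pm 1$, and a divisor of $p \pm 1$ cannot be a multiple of $p$; that single argument delivers Lemma \ref{z(p)leqp+1} and Lemma \ref{z(p)relprimetop} simultaneously. You instead combine the imported bound $z(p) \leq p+1$ with the fixed-point classification of Lemma \ref{allfixedpointsofform}: if $p \mid z(p)$ then $z(p) = p$ (the only multiple of $p$ in $[1,p+1]$), so $p$ would be a prime fixed point, yet the only prime of the form $5^k$ or $12\cdot 5^k$ is $5$. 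Every step checks out, and within this paper's framework --- where both ingredient lemmas are quoted as black boxes --- it is a legitimate and pleasantly lemma-level derivation requiring no Fibonacci-specific facts at all. The one caveat is logical ordering: in \cite{M1} the fixed-point classification is itself proved \emph{using} the coprimality of $p$ and $z(p)$ (it is needed to control the $p$-adic valuation of $z(p^e) = p^r z(p)$ inside the lcm computation), so your derivation inverts the dependency structure of the literature and could not replace the original proof without circularity; as a proof inside this paper, however, it stands.
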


Lemma \ref{z(p)relprimetop} is proven in Lemma 2.3 of \cite{M1}.

\begin{lemma} \label{ifndividesF_m}
If $n \vert F_m$, where $F_m$ is the $m\textsuperscript{th}$ number in the Fibonacci sequence, then $z(n) \vert m$.
\end{lemma}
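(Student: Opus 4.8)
The plan is to combine the Fibonacci addition formula with the division algorithm and the minimality built into the definition of $z$. Write $z = z(n)$, so that $z$ is the least positive integer with $n \mid F_z$. First I would record two standard facts about Fibonacci numbers, each provable by a short induction: the addition identity $F_{a+b} = F_{a+1}F_b + F_a F_{b-1}$ (valid for $a \geq 1$, $b \geq 1$), and the coprimality of consecutive terms, $\gcd(F_k, F_{k+1}) = 1$ for all $k \geq 0$.

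Next I would establish that $n \mid F_{qz}$ for every positive integer $q$. Since $n \mid F_z$, it suffices to show $F_z \mid F_{qz}$, which follows by induction on $q$: applying the addition identity gives $F_{(q+1)z} = F_{qz+1}F_z + F_{qz}F_{z-1}$, and both summands are divisible by $F_z$ once we assume $F_z \mid F_{qz}$. Hence $n \mid F_{qz}$ for all $q \geq 1$.

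With these in hand, suppose $n \mid F_m$ and use the division algorithm to write $m = qz + r$ with $0 \leq r < z$. The goal is to force $r = 0$. Assuming $r \geq 1$, the addition identity yields
\begin{equation}
F_m \ = \ F_{qz + r} \ = \ F_{qz+1} F_r + F_{qz} F_{r-1}.
\end{equation}
Reducing modulo $n$ and using $n \mid F_{qz}$, we obtain $F_m \equiv F_{qz+1} F_r \pmod{n}$. Since $n \mid F_m$, this gives $n \mid F_{qz+1} F_r$. Because $F_{qz}$ and $F_{qz+1}$ are consecutive Fibonacci numbers they are coprime, and $n \mid F_{qz}$ forces $\gcd(n, F_{qz+1}) = 1$; therefore $n \mid F_r$. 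But $0 < r < z$ contradicts the minimality of $z = z(n)$, so the only possibility is $r = 0$, i.e. $z(n) \mid m$.

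The routine inductions in the first two steps are the only real work, and the delicate point to handle carefully is the edge case: when $r = 0$ there is nothing to prove (it is exactly the desired conclusion), so the addition identity only needs to be invoked for $r \geq 1$, which avoids any issue with $F_{r-1}$. I would also note that the result admits a slicker one-line proof if one is willing to quote the classical identity $\gcd(F_a, F_b) = F_{\gcd(a,b)}$: from $n \mid F_m$ and $n \mid F_z$ one gets $n \mid \gcd(F_m, F_z) = F_{\gcd(m,z)}$, and the minimality of $z$ together with $\gcd(m,z) \mid z$ forces $\gcd(m,z) = z$, hence $z \mid m$.
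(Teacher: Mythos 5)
Your proof is correct, but there is nothing in the paper to compare it against: the paper does not prove this lemma, it simply cites it as Lemma 2.2 of \cite{M1} (the result is classical, going back to Lucas). Your argument is the standard self-contained one: the addition identity makes the Fibonacci numbers a divisibility sequence ($F_z \mid F_{qz}$), and then the division algorithm plus coprimality of consecutive Fibonacci numbers forces the remainder to vanish by minimality of $z(n)$. All the steps check out, including the key deduction that $n \mid F_{qz}$ and $\gcd(F_{qz}, F_{qz+1}) = 1$ give $\gcd(n, F_{qz+1}) = 1$, hence $n \mid F_r$. One small point to tighten: in the main step you apply the addition identity with $a = qz$, but a priori $q$ could be $0$ (nothing rules out $m < z(n)$ at that stage of the argument, since that is exactly what you are trying to contradict); either note that this case is an immediate contradiction ($n \mid F_m$ with $0 < m < z(n)$), or observe that the identity remains trivially valid at $a = 0$ because $F_0 = 0$ and $F_1 = 1$, so no separate case is really needed. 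Your closing remark is also apt: the one-line proof via $\gcd(F_a, F_b) = F_{\gcd(a,b)}$ is the slickest route, though that gcd identity is itself established by precisely the machinery of your main argument, so the two proofs are at bottom the same.
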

Lemma \ref{ifndividesF_m} is Lemma 2.2 of \cite{M1}. 

\begin{lemma}\label{z(p^e)=z(p^r)z(p)}
For all odd primes $p$, we have $ z\left(p^{e}\right) = p^{\max (e - a, 0)}z(p)$ where $a$ is the number of times that $p$ divides $F_{z(p)}$, $a \geq 1$.
In particular, $ z\left(p^{e}\right) = p^{r}z(p) $ for some $0 \leq r \leq e-1 $.
\end{lemma}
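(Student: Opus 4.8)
The plan is to reduce the computation of $z(p^{e})$ to understanding how the exact power of $p$ dividing $F_{n}$ grows with $n$. Write $\alpha = z(p)$ and let $v_{p}(\cdot)$ denote the exponent of $p$ in a nonzero integer, so that $a = v_{p}(F_{\alpha}) \ge 1$ by hypothesis. By definition $z(p^{e})$ is the least $n$ with $p^{e} \mid F_{n}$; since $e \ge 1$ this forces $p \mid F_{n}$, and Lemma \ref{ifndividesF_m} applied to $p$ then gives $\alpha \mid n$. Hence every $n$ we must consider has the form $n = \alpha k$, and the problem becomes: find the least $k \ge 1$ for which $v_{p}(F_{\alpha k}) \ge e$.

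The heart of the argument, and the step I expect to be the main obstacle, is the lifting-the-exponent identity
\begin{equation}
v_{p}(F_{\alpha k}) \ = \ a + v_{p}(k) \qquad \text{for all } k \ge 1.
\end{equation}
First I would dispose of $p = 5$ directly: here $\alpha = 5$ and one has $v_{5}(F_{n}) = v_{5}(n)$, which is exactly this identity with $a = 1$. For odd $p \neq 5$ I would argue in the ring of integers of $\Q(\sqrt{5})$. Using the Binet form $F_{m} = (\phi^{m} - \psi^{m})/\sqrt{5}$ with $\phi\psi = -1$, the divisibility $F_{\alpha} \mid F_{\alpha k}$ gives the integer telescoping expression $F_{\alpha k}/F_{\alpha} = \sum_{i=0}^{k-1}\phi^{\alpha i}\psi^{\alpha(k-1-i)}$; regrouping by powers of $\delta := \phi^{\alpha} - \psi^{\alpha} = \sqrt{5}\,F_{\alpha}$ via the hockey-stick identity yields
\begin{equation}
\frac{F_{\alpha k}}{F_{\alpha}} \ = \ \sum_{j=0}^{k-1}\binom{k}{j+1}\,\delta^{\,j}\,\psi^{\alpha(k-1-j)}.
\end{equation}
Fixing a prime $\mathfrak{p}$ above $p$ (which is unramified since $p \neq 5$, so its valuation restricts to $v_{p}$ on $\Z$), the factor $\psi^{\alpha}$ is a unit and $v_{\mathfrak{p}}(\delta) = v_{p}(F_{\alpha}) = a$. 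The $j = 0$ term is $k\,\psi^{\alpha(k-1)}$, of valuation exactly $v_{p}(k)$. The crux is to show every $j \ge 1$ term has strictly larger valuation: since $v_{p}\binom{k}{j+1} \ge v_{p}(k) - v_{p}(j+1)$, that term has valuation at least $v_{p}(k) + ja - v_{p}(j+1)$, and $ja \ge j > v_{p}(j+1)$ for all $j \ge 1$ when $p \ge 3$. Thus the $j=0$ term dominates, the ultrametric inequality gives $v_{p}(F_{\alpha k}/F_{\alpha}) = v_{p}(k)$, and the identity follows. (An elementary alternative would establish the two facts $v_{p}(F_{\alpha k}) = a$ for $p \nmid k$ and $v_{p}(F_{\alpha p}) = a+1$ by induction from integer identities such as $F_{2n} = F_{n}L_{n}$, then combine them.)

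Finally I would minimize. By the identity, $v_{p}(F_{\alpha k}) \ge e$ is equivalent to $v_{p}(k) \ge e - a$, so the least admissible $k$ is $k = p^{\max(e-a,\,0)}$ (namely $k=1$ when $e \le a$). Since $n = \alpha k$ is increasing in $k$, the minimal $n$ occurs at this $k$, giving $z(p^{e}) = \alpha\,p^{\max(e-a,\,0)} = p^{\max(e-a,\,0)}\,z(p)$, which is the main formula. Because $a \ge 1$ forces $\max(e-a,0) \le e-1$, setting $r = \max(e-a,0)$ yields $z(p^{e}) = p^{r}z(p)$ with $0 \le r \le e-1$, the stated refinement.
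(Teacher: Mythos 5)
Your proof is correct, but there is nothing in the paper to compare it against: the paper does not prove Lemma \ref{z(p^e)=z(p^r)z(p)} at all, it simply cites Theorem 2.4 of \cite{FM}. Your argument is a genuine, self-contained proof. Its core --- the lifting-the-exponent identity $v_{p}\left(F_{z(p)k}\right) = v_{p}\left(F_{z(p)}\right) + v_{p}(k)$ for odd $p$ --- is exactly Lucas's classical ``law of repetition,'' which is what Fulton and Morris formalize; the difference is method. They (and your own parenthetical elementary alternative) work over $\Z$ with integer identities for $F_{mn}$, whereas you expand $(\psi^{\alpha}+\delta)^{i}$ in the ring of integers of $\Q(\sqrt{5})$, regroup via the hockey-stick identity, and finish with the ultrametric inequality after checking that the $j=0$ term $k\,\psi^{\alpha(k-1)}$ has strictly minimal valuation. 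That crux step is right: $v_{p}\binom{k}{j+1} \ge v_{p}(k) - v_{p}(j+1)$ follows from $(j+1)\binom{k}{j+1} = k\binom{k-1}{j}$, and $ja \ge j > v_{p}(j+1)$ holds for $j \ge 1$ precisely because $p \ge 3$ --- this is where oddness of $p$ genuinely enters (the identity fails for $p=2$). The supporting details also check out: unramifiedness of $p \neq 5$ justifies restricting $v_{\mathfrak{p}}$ to $v_{p}$ on $\Z$, $\psi$ is a unit, $\alpha \mid n$ is forced by Lemma \ref{ifndividesF_m}, and the final minimization over $k$ is immediate. What your route buys is a proof the paper could include rather than outsource, at the cost of algebraic-number-theory machinery that [FM] avoids. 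One small economy: for $p=5$ you assert $v_{5}(F_{n}) = v_{5}(n)$ without proof; within this paper it is cheaper to note that $5^{e}$ is a fixed point by Lemma \ref{allfixedpointsofform}, so $z(5^{e}) = 5^{e} = 5^{e-1}z(5)$, which is the claimed formula with $a=1$ and needs no unproved input.
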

For a proof of Lemma \ref{z(p^e)=z(p^r)z(p)}, see Theorem 2.4 of \cite{FM}.



\section{Infinitely many integers take a given number of iterations to reach a fixed point} \label{infnreachfpink}

In this section we first prove Lemma \ref{coefconst}, which helps us show that when $z^{i}(n)$ is written as the product of a constant relatively prime to $ 5 $ and a power of $ 5 $, then $z^{i}\left(n \cdot 5^{a}\right)$ can be written as the product of that same constant and another power of $ 5 $. 
Table \ref{table2}
lists the smallest $ n $ that takes exactly $ k $ iterations to reach a fixed point for positive integers $ k $ up to 10. 
\begin{table}[htb]
\centering
\begin{tabular}{|c||c|c|} \hline
k    & n & FP  \\ \hline
1    & 1 & 1 \\
2    & 4 & 12 \\
3    & 3 & 12 \\
4    & 2 & 12  \\
5   & 11 & 60  \\
6   & 89 & 60  \\
7 & 1069 & 60  \\
8 & 2137 & 60  \\
9 & 4273 & 60  \\ 
10 & 59833 & 60 \\ \hline
\end{tabular}
\caption{First $ n $ that takes $ k $ iterations to reach a fixed point.}
\label{table2}
\end{table}

\begin{lemma}\label{coefconst}
Let $z^{i}(5^{a}\cdot n) = c_{(i,a, n)}5^{a_i}$, where $c_{(i, a, n)}$ is a constant that is relatively prime to $5$ and depends on $i$ and $n$, and $a_i \in \Z^+$. 
Fix $i, n \in \Z_{\geq 0}$. 
Then $c_{(i, a, n)}$ remains the same for all choices of $ a $.
\end{lemma}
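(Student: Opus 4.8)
The plan is to fix $n$ and induct on $i$, extracting the power of $5$ at each stage via the multiplicativity of $z$. The central structural fact I would use is that powers of $5$ are fixed points: by Lemma~\ref{allfixedpointsofform} we have $z(5^m) = 5^m$ for every $m \ge 0$. Combined with Lemma~\ref{z(n)=lcm(p^e's)} (or its generalization, Lemma~\ref{general z(n)=lcm(p^e's)}), this lets me peel the power of $5$ out of any argument cleanly: if $c$ is coprime to $5$, then $z(c\cdot 5^{m}) = \lcm\!\left(z(c),\,z(5^m)\right) = \lcm\!\left(z(c),\,5^m\right)$, since $\lcm(c,5^m)=c\cdot 5^m$.

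For the base case $i = 0$, I would write $n = 5^{b}m$ with $\gcd(m,5)=1$, so that $z^{0}(5^{a}n) = 5^{a+b}m$ and hence $c_{(0,a,n)} = m$, which manifestly does not depend on $a$; only the exponent $a_0 = a+b$ does. This isolates at the outset the phenomenon we wish to propagate: $a$ influences the power of $5$ but not the coprime coefficient.

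For the inductive step, suppose $z^{i}(5^{a}n) = c_i\, 5^{a_i}$, where the coefficient $c_i := c_{(i,a,n)}$ is coprime to $5$ and, by the inductive hypothesis, independent of $a$, while the exponent $a_i$ may depend on $a$. Writing $z(c_i) = c_i'\,5^{s}$ with $\gcd(c_i',5)=1$ — so that $c_i'$ and $s$ depend only on $c_i$, hence only on $i$ and $n$ — the identity above yields
\begin{equation*}
z^{i+1}(5^{a}n) \ = \ z\!\left(c_i\,5^{a_i}\right) \ = \ \lcm\!\left(z(c_i),\,5^{a_i}\right) \ = \ c_i'\,5^{\max(s,\,a_i)}.
\end{equation*}
Thus $c_{(i+1,a,n)} = c_i'$, which is independent of $a$, completing the induction.

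The step I expect to require the most care is verifying that the power of $5$ genuinely separates in the lcm. The factor $z(c_i)$ may itself carry a power of $5$ — recall that for a prime $p\neq 5$ the value $z(p)$ can be divisible by $5$ — so one must check that this contribution merges with the existing $5^{a_i}$ through the $\max$ in the exponent and never pollutes the coprime part $c_i'$. Because $c_i'$ is determined entirely by $c_i$, and the exponent of $5$ enters only through the lcm with $5^{a_i}$, the coprime part is insulated from $a$; this is the observation that drives the argument.
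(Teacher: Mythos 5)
Your proof is correct and takes essentially the same approach as the paper's: induction on $i$, using Lemma~\ref{z(n)=lcm(p^e's)} to split off the power of $5$ and the fact that $z$ fixes powers of $5$, so that the coprime-to-$5$ part of $z^{i+1}(5^{a}n)$ is determined by the coprime part of $z^{i}(5^{a}n)$ alone, independently of $a$. Your write-up is marginally cleaner in that it starts the induction at $i=0$ and treats $z(c_i)$ as a single quantity $c_i'5^{s}$ rather than expanding prime factorizations and splitting into the cases $c_i = 1$ and $c_i \neq 1$, but the underlying argument is the same.
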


\begin{proof}
Let the prime factorization of an integer $ n $ be $ n = 5^{e_1}p_{2}^{e_2} \cdots p_{m}^{e_m}$ where $e_{1} \geq 0$ and each of the  $e_{2},e_{3},\ldots,e_{m} \geq 1$.

We proceed by induction on the number of iterations of $ z $.
First suppose $i = 1$ and let the prime factorization of 
$\lcm \left( z(p_{2}^{e_2}), \ldots, z(p_{m}^{e_m}) \right) $ equal $ 5^{f_1}q_{2}^{f_2} \cdots q_{r}^{f_r}$ where $f_1 \geq 0$ and each of the $f_2,\ldots,f_r \geq 1$. Observe that

\begin{align}
z(5^{a}\cdot n)  
&  \ = \  \lcm\left(z(5^{e_1 +a}), z(p_{2}^{e_2}), \ldots, z(p_{m}^{e_m})\right) \nonumber\\ 
&  \ = \  \lcm\left(z\left(5^{e_1 +a}\right), \lcm \left(z(p_{2}^{e_2}), \ldots, z(p_{m}^{e_m}) \right) \right) \nonumber \\
&  \ = \  \lcm\left(5^{e_1+a}, 5^{f_1}q_{2}^{f_2}\cdots q_{r}^{f_r}\right) \nonumber\\
&  \ = \  q_{2}^{f_2} \cdots q_{r}^{f_r} \cdot 5^{\text{max}(e_1+a, f_1)}.
\end{align}
Thus, $c_{(1, a, n)} = q_{2}^{f_2} \cdots q_{r}^{f_{r}}$ for any non-negative integer $ a $ when $ n $ is not a power of $5$ or when $\lcm\left( z(p_2^{e_2}), \ldots, z(p_m^{e_m}) \right)$ is not a power of $5$, and $c_{(1, a, n)} = 1$ otherwise.

Next, assume that for some $ i, n \in \mathbb{Z}^{+} $, we have $z^{i}(5^{a}\cdot n) = c_{(i, a, n)}5^{a_i} $ where $c_{(i,a,n)}$ is the same for all choices of $a \in \mathbb{Z}_{\geq 0}$. 
First suppose $c_{(i,a, n)}=1$. 
Then
\begin{align}
z^{i+1}(5^a \cdot n)
&  \ = \  z(z^{i}(5^{a}\cdot n)) \nonumber\\
&  \ = \  z\left((c_{(i,a,n)}) \cdot 5^{a_i} \right) \quad\quad \text{where $ a_i \in \Z_{\geq0} $} \nonumber\\
&  \ = \  \lcm\left(z(1), z(5^{a_i})\right) \nonumber\\
& \ = \ 5^{a_i}. 
\end{align}
Therefore, for any choice of $a$, $c_{(i+1,a,n)}=1$.

Now suppose $c_{(i,a,n)} \neq 1$. Then let the prime factorization of 
$c_{(i,a,n)} = q_{1}^{f_{1}} \cdots q_{r}^{f_{r}}$, where 
$q_{1},\ldots,q_{r} \neq 5 $ since $ \gcd \left(c_{(i,a,n)},5\right) = 1 $. 
Let 
$ \lcm\left( z(q_{1}^{f_1}),\ldots,z(q_{r}^{f_r})\right) =  5^{g_1}h_{2}^{g_2}\cdots h_{j}^{g_j}$ 
where $h_{2}, \ldots, h_{j}$ are primes not equal to 5. 
Then
\begin{align}
z^{i+1}(5^a \cdot n)
&  \ = \  z(z^{i}(5^{a}\cdot n)) \nonumber\\
&  \ = \  z\left((c_{(i,a,n)}) \cdot 5^{a_i} \right) \quad\quad \text{where $ a_i \in \Z_{\geq0} $} \nonumber\\
&  \ = \  \lcm\left( z(q_{1}^{f_1}),\ldots,z(q_{r}^{f_r}), z(5^{a_i})\right) \nonumber\\
&  \ = \  \lcm\left( \lcm\left(z(q_{1}^{f_1}),\ldots,z(q_{r}^{f_r})\right), z(5^{a_i})\right) \nonumber\\
&  \ = \  \lcm\left(5^{g_1}h_{2}^{g_2} \cdots h_{j}^{g_j}, 5^{a_i}\right) \nonumber\\
&  \ = \  h_{2}^{g_2} \cdots h_{j}^{g_j} \cdot 5^{\text{max}(g_1, t)} \\
&  \ = \ c\left(i,a, n\right) \cdot 5^{\text{max}\left(g,t\right)}.
\end{align} 
\end{proof}

We use Lemma \ref{coefconst} in our proof of Theorem \ref{infntakek} to show that if there exists an integer $n$ that takes exactly $k$ iterations of $z$ to reach a fixed point, then there are infinitely many integers that take exactly $k$ iterations of $z$ to reach a fixed point. 
The following lemma provides us with information on the $k\textsuperscript{th}$ iteration of $z$ on powers of $10$, enabling us to find integers that require exactly $k$ iterations of $z$ to reach a fixed point for any positve integer $k$.

\begin{lemma}\label{10mgoestoink}
For all $k, m \in \mathbb{Z}, k\geq 0, m\geq 4$ and $2k+2\leq m$, $z^{k}(10^{m}) = 3\cdot5^m\cdot2^{m-2k}$.
\end{lemma}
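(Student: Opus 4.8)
The plan is to prove the identity by induction on $k$, exploiting that $z$ distributes over the prime-power factorization of its argument (Lemma \ref{z(n)=lcm(p^e's)}) together with three explicit inputs: $z(2^a) = 3 \cdot 2^{a-2}$ for $a \geq 3$ and $z(3) = 4$ from Lemma \ref{pow2equals}, and the fixed-point identity $z(5^m) = 5^m$ from Lemma \ref{allfixedpointsofform}. Writing $10^m = 2^m 5^m$, the point is that $z$ interacts with the prime $5$ trivially (it is a fixed point) and with the prime $2$ in a perfectly controlled way, halving the $2$-part's exponent (in the additive sense, subtracting $2$) while regenerating a factor of $3$.

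For the base case $k = 1$, I would compute $z(10^m) = \lcm\left(z(2^m), z(5^m)\right)$. Since $m \geq 4 \geq 3$, Lemma \ref{pow2equals} gives $z(2^m) = 3 \cdot 2^{m-2}$, and $z(5^m) = 5^m$; as $3 \cdot 2^{m-2}$ is coprime to $5^m$, their least common multiple is $3 \cdot 5^m \cdot 2^{m-2}$, which matches the claimed formula at $k=1$.

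For the inductive step, I would assume the formula for $k$ and assume $2(k+1) + 2 \leq m$. This inequality gives $m - 2k \geq 4$, which both guarantees that the induction hypothesis applies (its requirement $2k+2 \leq m$ is implied) and that the exponent of $2$ is large enough to reinvoke Lemma \ref{pow2equals}. I would then compute
\[ z^{k+1}(10^m) = z\left(3 \cdot 5^m \cdot 2^{m-2k}\right) = \lcm\left(z(3), z(5^m), z(2^{m-2k})\right) = \lcm\left(4, \, 5^m, \, 3 \cdot 2^{m-2k-2}\right). \]
Because $m - 2k - 2 \geq 2$, the power of $2$ contributed by $z(2^{m-2k}) = 3 \cdot 2^{m-2k-2}$ already dominates the $2^2$ coming from $z(3) = 4$, so the least common multiple collapses to $3 \cdot 5^m \cdot 2^{m-2k-2} = 3 \cdot 5^m \cdot 2^{m-2(k+1)}$, completing the induction.

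The only point requiring genuine care, and the real purpose of the hypothesis $2k + 2 \leq m$, is the exponent bookkeeping on the prime $2$. Each application of $z$ lowers that exponent by exactly $2$, yet the formula $z(2^a) = 3 \cdot 2^{a-2}$ is valid only for $a \geq 3$; the constraint is precisely what keeps this exponent at least $3$ before the final step, so that the pattern persists, the previous factor of $3$ is absorbed into the power of $2$ via $z(3) = 4$, and a fresh factor of $3$ is supplied by $z(2^a)$. I expect this tracking of the $2$-adic exponent, rather than any deeper structural fact, to be the sole obstacle.
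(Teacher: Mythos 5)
Your proof is correct and follows essentially the same approach as the paper's: induction on $k$, with the base case $z(10^m)=\lcm\left(z(2^m),z(5^m)\right)=3\cdot 5^m\cdot 2^{m-2}$ and the inductive step applying $z$ to $3\cdot 5^m\cdot 2^{m-2k}$ via the $\lcm$ decomposition and the formulas $z(3)=4$, $z(5^m)=5^m$, $z(2^a)=3\cdot 2^{a-2}$. Your explicit tracking of the constraint $m-2k\geq 4$ (justifying the use of Lemma \ref{pow2equals} and the collapse of the $\lcm$) is slightly more careful than the paper's one-line appeal to $m \geq 2k+4$, but the argument is the same.
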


\begin{proof}
We proceed by induction on the number of iterations of $ z $.
Observe that when $k = 1$,
\begin{align}
z\left(10^{m}\right) 
&  \ = \  \lcm \left(z\left(2^{m}\right),z\left(5^{m}\right)\right) \nonumber\\
&  \ = \  \lcm \left(3\cdot 2^{m-2}, 5^m\right) \nonumber\\
&  \ = \  3 \cdot5^{m} \cdot 2^{m-2}.
\end{align}
Now suppose that $z^{k}(10^{m}) \ = \ 3\cdot5^m\cdot2^{m-2k}$ for some positive integer $k$. 
Then we have
\begin{align}
z^{k+1}(10^{m})
& \ = \ z\left(z^{k}\left(10^{m}\right)\right) \nonumber\\
&  \ = \  z\left( 3\cdot5^m\cdot2^{m-2k}\right) \nonumber\\
&  \ = \  \lcm \left(z\left(3\right),z\left(5^{m}\right),z(2^{m-2k}) \right) \nonumber\\
&  \ = \  \lcm \left(4, 5^m,2^{m-2k-2} \cdot 3\right). 
\end{align}  
By assumption, $ m \geq 2(k+1)+2  =  2k+4 $, thus we have $z^{k+1}(10^{m}) \ = \  3 \cdot 5^{m} \cdot 2^{m-2(k+1)}$.
\end{proof}

Using Lemmas \ref{coefconst} and \ref{10mgoestoink}, we now prove Theorem \ref{infntakek}: \\

\textit{For all $k \in \Z_{\geq 0}$, there exist infinitely many $n$ with fixed point order $k$.}

\begin{proof} 
[Proof of Theorem \ref{infntakek}]
Let $g, h \in \Z^{+}, g>h$. Then $g=h+\ell$ for some $\ell \in \Z^{+}$. 
Suppose that $z^h(n)$ is a fixed point. 
Then $z^g(n) = z^{\ell}\left(z^{h}(n) \right)$, so $z^{g}(n)$ is also a fixed point. 
Similarly, if $z^{g}(n)$ is not a fixed point, then $z^{h}(n)$ cannot be a fixed point for any $h<g$. 

Note that by Lemma \ref{10mgoestoink}
\begin{equation}
z^{k}(10^{2k+2})  \ = \  3 \cdot 5^{2k+2} \cdot 2^{(2k+2)-2k}  \ = \  12 \cdot 5^{2k+2}
\end{equation}
and
\begin{equation}
z^{k-1}(10^{2k+2})  \ = \  3 \cdot 5^{2k+2} \cdot 2^{(2k+2)-2(k-1)}  \ = \  12 \cdot 5^{2k+2} \cdot 2^{2}.
\end{equation}
Thus, $10^{2k+2}$ takes exactly $k$ iterations of $z$ to reach a fixed point, as $z^{k-1}(10^{2k+2})$ is not a fixed point. 
We prove that we can find infinitely many integers that take exactly $k$ iterations to reach a fixed point once one such integer is identified (which we have just done). 

We first consider the case where an integer $ n $ goes to a fixed point of the form $ 12 \cdot 5^{a'} $, where $a' \in \mathbb{Z}_{\geq 0}$, in exactly $k$ iterations of $z$. 
Thus, $ z^{k}(n) =  12\cdot5^{a'} $ and $ z^{k-1}(n) = c\cdot5^{b'} $ for some non-negative integer $ b' $ and positive integer $ c \neq 1,12 $.
Let $ r $ be an arbitrary positive integer.
By Lemma \ref{coefconst}, we have $ z^{k}\left(5^{r}\cdot n\right) =  12 \cdot 5^{a''} $ and $ z^{k-1}\left(5^{r}\cdot n\right) = c \cdot 5^{b''} $ for non-negative integers $ a'' $ and $ b'' $.
Thus, $ 5^{r}\cdot n $ requires exactly $ k $ iterations to reach a fixed point.
Next we consider the case where $ n $ goes to a fixed point of the form $ 5^{a'} $ in exactly $ k $ steps.
Then, $ z^{k}(n) =  5^{a'} $ and $ z^{k-1}(n) = c \cdot 5^{b'} $ for some non-negative integer $ b' $ and positive integer $ c \neq 1,12 $.
Let $ r $ be an arbitrary positive integer.
By Lemma \ref{coefconst}, we have $ z^{k}\left(5^{r}\cdot n\right) = 5^{a''} $ and $ z^{k-1}\left(5^{r}\cdot n\right) = c \cdot 5^{b''} $ for non-negative integers $ a'' $ and $ b'' $.
Thus, $ 5^{r}\cdot n $ requires exactly $ k $ iterations to reach a fixed point. 
As $ r $ is arbitrary, there are infinitely many integers with fixed point order $ k $ for any positive integer $ k $.
\end{proof}


\section{Infinitely many integers go to each fixed point}

We begin this section with a proof about the $ k \textsuperscript{th} $ iteration of $z$ on powers of $2$. 

\begin{lemma} \label{z^i(2^a)equals}
For all $k,a \in \mathbb{Z}$ such that $2\leq k$ and $4\leq a$, $z^{k}(2^{a}) = \lcm \left( 2^{a-2k} \cdot 3, 4 \right)$.
\end{lemma}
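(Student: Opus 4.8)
The plan is to prove the statement $z^{k}(2^{a}) = \lcm(2^{a-2k}\cdot 3, 4)$ by induction on $k$, exactly as the paper does for the analogous power-of-$10$ lemma. The key feature to notice is that once we apply $z$ to a power of $2$, a factor of $3$ appears (by Lemma \ref{pow2equals}, $z(2^{a}) = 2^{a-2}\cdot 3$ for $a\geq 3$), and from then on the iterate is no longer a pure power of $2$; subsequent applications of $z$ must use Lemma \ref{z(n)=lcm(p^e's)} to split across the primes $2$ and $3$. The presence of $z(3) = 4$ (from Lemma \ref{pow2equals} with $b=1$) is what forces the $\lcm(\cdot, 4)$ in the statement: it guarantees a floor of $4$ on the power-of-$2$ part, which matters once the exponent $a-2k$ drops to $1$ or $0$.

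First I would establish the base case $k = 2$. Using Lemma \ref{pow2equals} twice, I compute $z(2^{a}) = 3\cdot 2^{a-2}$ for $a\geq 3$, and then $z^{2}(2^{a}) = z(3\cdot 2^{a-2}) = \lcm(z(3), z(2^{a-2})) = \lcm(4, 3\cdot 2^{a-4})$ for $a\geq 6$, matching $\lcm(2^{a-4}\cdot 3, 4)$. I would need to handle the small-$a$ boundary carefully: the hypothesis $4\leq a$ together with $2\leq k$ means the lemma is only asserted where $a$ can be small relative to $k$, so I must check that $z(2^{a-2})$ can legitimately be replaced using Lemma \ref{pow2equals}, which requires the inner exponent to be at least $3$. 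This is where the $\lcm$ with $4$ earns its keep, because when the exponent on $2$ would become too small, the $\lcm$ absorbs the discrepancy and Lemma \ref{pow2equals} no longer applies in its stated form.

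For the inductive step, I would assume $z^{k}(2^{a}) = \lcm(2^{a-2k}\cdot 3, 4)$ and compute $z^{k+1}(2^{a}) = z\left(\lcm(2^{a-2k}\cdot 3, 4)\right)$. Using Lemma \ref{general z(n)=lcm(p^e's)} I can pull the $z$ through the $\lcm$, writing $z\left(\lcm(2^{a-2k}\cdot 3, 4)\right) = \lcm\left(z(2^{a-2k}\cdot 3),\, z(4)\right)$, then further split $z(2^{a-2k}\cdot 3) = \lcm(z(2^{a-2k}), z(3))$ by Lemma \ref{z(n)=lcm(p^e's)}. Applying Lemma \ref{pow2equals} to each power-of-$2$ term and simplifying the nested $\lcm$ should collapse to $\lcm(2^{a-2(k+1)}\cdot 3, 4)$.

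The main obstacle will be the bookkeeping at the boundary where the exponent $a - 2k$ falls below the range in which Lemma \ref{pow2equals} applies cleanly (i.e.\ when $a-2k < 3$, or becomes $0$ or negative). In those regimes $2^{a-2k}\cdot 3$ is dominated by the $4$ inside the $\lcm$, so the right-hand side stabilizes at $\lcm(3,4) = 12$ or at $4$, and I must verify that the iteration of $z$ genuinely stabilizes there too — that is, that $12$ and $4$ map correctly under $z$ (indeed $z(12) = 12$ is a fixed point by Lemma \ref{allfixedpointsofform}, and $z(4) = 6$, $z(6) = 12$). I would argue that the clean algebraic identity holds as long as $a - 2k \geq 1$ so that Lemma \ref{pow2equals} can be invoked on the relevant terms, and treat the stabilized tail as a separate, easily checked case. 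Getting these threshold inequalities consistent with the hypotheses $2\leq k$ and $4\leq a$ is the delicate part; the algebra itself is routine once the case division is fixed.
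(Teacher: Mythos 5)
Your proposal follows essentially the same route as the paper's proof: induction on $k$ with base case $k=2$ computed via Lemma \ref{pow2equals}, and an inductive step split between the generic regime (where the exponent of $2$ is large enough for Lemma \ref{pow2equals} to apply) and the stabilized regime (where both sides collapse to the fixed point $12$), the only cosmetic difference being that you pull $z$ through the $\lcm$ via Lemma \ref{general z(n)=lcm(p^e's)} while the paper first simplifies the $\lcm$ to a single integer and then applies $z$. One small correction to your threshold bookkeeping: the clean identity needs $a-2k \geq 3$, not $a-2k \geq 1$ (since $z(2)=3$ and $z(4)=6$ do not follow the $2^{a-2}\cdot 3$ formula), but as you note those low-exponent cases land in the stabilized tail where everything equals $12$, so the case division you describe still closes the argument.
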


\begin{proof} We induct on $k$; we use Lemma \ref{pow2equals} to note that $z(2^{a})=z^{a-2}\cdot 3$ (valid as $a \geq 3$) with base case $k=2$:
\begin{align}
z^{2}(2^{a}) 
&  \ = \  z\left(z(2^{a})\right) \nonumber\\
&  \ = \  z(2^{a-2} \cdot 3) \nonumber\\
&  \ = \  \lcm\left( z(2^{a-2}), z(3) \right) \nonumber\\
&  \ = \  \lcm\left(2^{a-4}\cdot 3, 4 \right).
\end{align}
For the inductive step, assume that $z^{k}(2^{a}) = \lcm \left( 2^{a-2k} \cdot 3, 4 \right)$ for some $k$. 
We show that $z^{k+1}(2^{a})  \ = \  \lcm \left( 2^{a-2(k+1)} \cdot 3, 4 \right)$. 
First suppose that $a > 2k+2$. Then
\begin{align}
z^{k+1}(2^{a}) 
&  \ = \   z\left(z^{k}(2^{a})\right) \nonumber\\
&  \ = \  z\left(\lcm \left( 2^{a-2k} \cdot 3, 4 \right)\right) \nonumber\\
&  \ = \  z\left(2^{a-2k} \cdot 3\right)\nonumber \\
&  \ = \  \lcm \left(z(2^{a-2k}), z(3)\right)\nonumber \\
&  \ = \  \lcm \left( 2^{a-2k-2} \cdot 3, 4 \right) \nonumber\\
&  \ = \  \lcm \left( 2^{a-2(k+1)} \cdot 3, 4 \right).
\end{align}
Now suppose that $a \leq 2k+2$. 
Then 
\begin{align}
z^{k+1}(2^{a}) 
&  \ = \  z\left(z^{k}\left(2^{a}\right)\right) \nonumber\\
&  \ = \  z\left(\lcm \left( 2^{a-2k} \cdot 3, 4 \right)\right) \nonumber\\
&  \ = \  z\left(12\right) \nonumber\\
&  \ = \  12 \nonumber\\
&  \ = \  \lcm \left( 2^{a-2(k+1)} \cdot 3, 4 \right).
\end{align}
\end{proof}

We now use Lemma \ref{z^i(2^a)equals} in our proof of Lemma \ref{all2to12}, which proves that all powers of 2 go to the fixed point $12$ and determines how many iterations of $z$ it takes for a power of 2 to reach $12$.

\begin{lemma} \label{all2to12}
For all $a \in \Z^{+}$, $2^{a}$ reaches the fixed point $12$ in finitely many iterations of $z$. 
For $a \geq 4$, exactly $\lceil \frac{a}{2}\rceil -1 $  iterations of $z$ are required to reach $12$.
\end{lemma}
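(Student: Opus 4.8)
The plan is to treat the regime $a \geq 4$ with the explicit formula just established and to dispatch the three small cases $a \in \{1,2,3\}$ by direct computation. For $a \geq 4$, I would first extend Lemma \ref{z^i(2^a)equals} so that it covers every iteration count $k \geq 1$, not merely $k \geq 2$: for $k = 1$, Lemma \ref{pow2equals} gives $z(2^a) = 2^{a-2}\cdot 3$, and since $a - 2 \geq 2$ this agrees with $\lcm(2^{a-2}\cdot 3, 4)$. Hence $z^{k}(2^{a}) = \lcm(2^{a-2k}\cdot 3, 4)$ holds for all $k \geq 1$ and all $a \geq 4$.

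Next I would read off exactly when this value equals the fixed point $12 = 2^{2}\cdot 3$. Writing the right-hand side as $2^{\max(a-2k,\,2)}\cdot 3$, I observe that when $a - 2k \geq 3$ the value is $2^{a-2k}\cdot 3 \geq 24$, which is strictly larger than $12$ and, by Lemma \ref{allfixedpointsofform}, is not itself a fixed point; whereas when $a - 2k \leq 2$ the proof of Lemma \ref{z^i(2^a)equals} yields the value $12$. Thus the fixed point order of $2^{a}$ is the least $k \geq 1$ with $a - 2k \leq 2$, i.e.\ the least $k$ with $2k \geq a - 2$. This is $k = \lceil (a-2)/2\rceil$, and a short parity check (treating $a$ even and $a$ odd separately) confirms $\lceil (a-2)/2\rceil = \lceil a/2\rceil - 1$, giving the stated count.

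For the remaining exponents I would iterate $z$ directly, using Lemma \ref{pow2equals}, Lemma \ref{z(n)=lcm(p^e's)}, and evaluation of small $z$-values: for $a = 1$ one gets $2 \to 3 \to 4 \to 6 \to 12$ (four steps), for $a = 2$ one gets $4 \to 6 \to 12$ (two steps), and for $a = 3$ one gets $8 \to 6 \to 12$ (two steps). Since $12$ is a fixed point by Lemma \ref{allfixedpointsofform}, each chain terminates, and together with the $a \geq 4$ analysis this shows every $2^{a}$ reaches $12$ in finitely many iterations.

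I expect the main obstacle to be purely bookkeeping around the boundary of the formula: the expression $\lcm(2^{a-2k}\cdot 3, 4)$ is only literally meaningful while $a - 2k \geq 0$, so I must argue carefully that the value is exactly $12$ for \emph{every} $k$ with $a - 2k \leq 2$ (not merely the first such $k$) and that it strictly exceeds $12$ beforehand, so the minimal such $k$ is correctly identified as the fixed point order. The separate handling of $k = 1$ (outside the range of Lemma \ref{z^i(2^a)equals}) and of the small exponents $a \in \{1,2,3\}$ (outside the range of the displayed count) are the only other points requiring care.
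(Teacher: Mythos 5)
Your proof is correct and takes essentially the same route as the paper: both rest on the closed form $z^{k}(2^{a}) = \lcm\left(2^{a-2k}\cdot 3, 4\right)$ from Lemma \ref{z^i(2^a)equals}, verify that the iterate at step $\lceil a/2\rceil - 1$ equals $12$ while the earlier iterates are not fixed points, and dispatch small exponents by direct computation. If anything, your explicit extension of the formula to $k=1$ is a point of care the paper glosses over, since the paper evaluates the formula at step $\left(\lceil a/2\rceil - 1\right)-1$, which equals $1$ when $a \in \{5,6\}$, outside the stated range $k \geq 2$ of Lemma \ref{z^i(2^a)equals}.
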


\begin{proof} 
When $a\leq 4$, the claim follows from straightforward computation. 
Notice that $z^{4}(2) = 12, z^{2}(2^{2}) = 12, z^{2}(2^{3}) = 12, z(2^{4}) = 12$.
We prove for $a>4$ using Lemma \ref{z^i(2^a)equals}.

Note that if $ a $ is even, then $\lceil\frac{a}{2}\rceil = \frac{a}{2}$.
Thus, in the case where $a$ is even, 
\begin{equation}
z^{\lceil\frac{a}{2}\rceil-1}(2^{a})  \ = \  \lcm(2^{a-2(\frac{a}{2}-1)} \cdot 3, 4)  \ = \  \lcm (2^{a-a+2} \cdot 3,4)  \ = \  \lcm (2^{2} \cdot 3,4)  \ = \  12.
\end{equation}
So $2^a$ takes at most $ \left\lceil \frac{a}{2} \right\rceil -1$ iterations of $z$ to reach a fixed point when $ a $ is even. 
We next show that it takes exactly $\lceil \frac{a}{2}\rceil -1$ by showing that $z^{(\lceil \frac{a}{2}\rceil-1)-1}(2^a)$ is not a fixed point:
\begin{equation}
z^{\left(\left\lceil\frac{a}{2}\right\rceil-1\right)-1}(2^{a}) =  \lcm(2^{a-2(\frac{a}{2}-2)} \cdot 3, 4) \ = \ \lcm (2^{a-a+4} \cdot 3,4) = \lcm (2^{4} \cdot 3,4) = 12\cdot2^{2},
\end{equation}
which is not a fixed point.
When $a$ is odd, $ \left\lceil \frac{a}{2} \right\rceil -1 = \frac{a-1}{2}$, giving us
\begin{equation}
z^{\left\lceil \frac{a}{2} \right\rceil-1}(2^{a})  \ = \  \lcm(2^{a-2(\frac{a-1}{2})} \cdot 3, 4)  \ = \  \lcm (2^{a-a+1} \cdot 3,4)  \ = \  \lcm (2 \cdot 3,4)  \ = \  12.
\end{equation}
However
\begin{equation}
z^{(\lceil \frac{a}{2}\rceil-1)-1}(2^{a})  \ = \  \lcm(2^{a-2(\frac{a-1}{2}-1)} \cdot 3, 4)  \ = \  \lcm (2^{a-a+1+2} \cdot 3,4)  \ = \  \lcm (2^{3} \cdot 3,4)  \ = \  12\cdot2,
\end{equation}
which is not a fixed point.
\end{proof}

Lemma \ref{all2to12} and Lemma \ref{coefconst} now yield Theorem \ref{infngotoeachFP}:

\medskip
\medskip

\textit{Infinitely many integers $n$ go to each fixed point of the form $12 \cdot 5^{k}$. }

\begin{proof}[Proof of Theorem \ref{infngotoeachFP}]
Using Lemma \ref{all2to12}, we know that $ z^{\lceil\frac{a}{2}\rceil-1}(2^{a} \cdot 5^{0}) = 12 $. 
Thus, by Lemma \ref{coefconst}, $z^{\lceil\frac{a}{2}\rceil-1}(2^{a} \cdot 5^{b}) = 12 \cdot 5^{b'}$ for some nonnegative integer $b'$.
We show that $ b = b' $ by inducting on $ t $ to show that $ z^{t}(2^a \cdot 5^{b}) =  2^{a'} \cdot 3 \cdot 5^{b}, a'\in\mathbb{Z}^{+}$, for all $ a > t, a > 2 $. 
When $ t = 1 $,
\begin{align}
z(2^{a}\cdot 5^{b}) 
&  \ = \  \lcm\left( z(2^{a}), z(5^{b})\right) \nonumber\\
&  \ = \  \lcm\left( 2^{a-2} \cdot 3, 5^{b} \right) \nonumber\\
&  \ = \   2^{a-2} \cdot 3 \cdot 5^{b}.
\end{align}
Now suppose that $z^{t}(2^a \cdot 5^{b}) = 2^{a'} \cdot 3 \cdot 5^{b}$ for some positive integer $a'$. Then
\begin{align}
z^{t+1}(2^{a}\cdot 5^{b}) 
&  \ = \   z\left(z^{t}(2^{a}\cdot 5^{b})\right) \nonumber\\
&  \ = \  z\left( 2^{a'} \cdot 3 \cdot 5^{b} \right) \nonumber\\
&  \ = \  \lcm\left( z(2^{a'}), z(3), z(5^{b})\right).
\end{align}
If $a' \leq 3 $, then $\lcm\left( z(2^{a'}), z(3), z(5^{b})\right) = 2^{2} \cdot 3 \cdot 5^{b}$. 
If $a' > 3$, then
\begin{align}
\lcm\left( z(2^{a'}), z(3), z(5^{b})\right) 
&  \ = \  \lcm\left( 2^{a'-2} \cdot 3, 4, 5^{b} \right) \nonumber\\
&  \ = \   2^{a'-2} \cdot 3 \cdot 5^{b}. 
\end{align}
A straightforward calculation shows that $2 \cdot 5^{b}$ and $2^{2} \cdot 5^{b}$ iterate to the fixed point $12 \cdot 5^{b}$ (see Appendix \ref{2,4timespow5iterateto12} for a proof).
Therefore $2^{a}\cdot 5^{b}$ iterates to the fixed point $12 \cdot 5^{b}$ for all $a \in \mathbb{Z}^{+}$.
\end{proof}


\section{All integers have finite fixed point order}

We now prove that when $a,b$ are relatively prime, $z^k(ab) = \lcm (z^k(a),z^k(b))$. 
We will use this in the proof of Theorem \ref{allngotofp}.
\begin{lemma}\label{z^k(n)=lcm(z^k(a),z^k(b))}
Let $n = ab$ where $\gcd (a,b) = 1$. 
Then $z^{k}(n) = \lcm(z^{k}(a),z^{k}(b))$.
\end{lemma}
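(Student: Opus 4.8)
The plan is to prove the statement $z^k(ab) = \lcm(z^k(a), z^k(b))$ by induction on $k$, leveraging the multiplicativity-type result already available in Lemma \ref{general z(n)=lcm(p^e's)}. The crucial observation is that Lemma \ref{general z(n)=lcm(p^e's)} tells us $z(\lcm(m_1, \dots, m_n)) = \lcm(z(m_1), \dots, z(m_n))$ for \emph{any} integers $m_i$, with no coprimality hypothesis on the arguments. This is the engine that lets the induction proceed cleanly, because at each stage the inductive hypothesis produces an $\lcm$ that we then feed into $z$.

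First I would establish the base case $k=1$. Since $\gcd(a,b)=1$, we have $ab = \lcm(a,b)$, so $z(ab) = z(\lcm(a,b)) = \lcm(z(a), z(b))$ by Lemma \ref{general z(n)=lcm(p^e's)} (taking the two arguments to be $a$ and $b$). This handles $k=1$.

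For the inductive step, I would assume $z^k(ab) = \lcm(z^k(a), z^k(b))$ and compute
\begin{align}
z^{k+1}(ab)
&\ = \ z\left(z^k(ab)\right) \nonumber\\
&\ = \ z\left(\lcm\left(z^k(a), z^k(b)\right)\right) \nonumber\\
&\ = \ \lcm\left(z\left(z^k(a)\right), z\left(z^k(b)\right)\right) \nonumber\\
&\ = \ \lcm\left(z^{k+1}(a), z^{k+1}(b)\right),
\end{align}
where the first equality is the definition of iteration, the second is the inductive hypothesis, the third is Lemma \ref{general z(n)=lcm(p^e's)} applied to the two integers $z^k(a)$ and $z^k(b)$, and the last is again the definition of iteration. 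This closes the induction.

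The main thing to watch for — and the reason the proof is short rather than delicate — is that one must invoke the \emph{general} form of the lcm-compatibility (Lemma \ref{general z(n)=lcm(p^e's)}) and not the prime-factorization form (Lemma \ref{z(n)=lcm(p^e's)}). The former has no coprimality requirement on its arguments, which is essential: even though $a$ and $b$ start out coprime, the iterates $z^k(a)$ and $z^k(b)$ need not be coprime (for instance, both may eventually share factors of $5$), so we cannot reapply a coprimality-based splitting at each stage. Recognizing that the unrestricted lemma is exactly the tool needed is the only real subtlety; once it is in hand, the induction is routine.
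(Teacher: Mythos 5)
Your proof is correct and follows essentially the same route as the paper: induction on $k$, with the inductive step applying Lemma \ref{general z(n)=lcm(p^e's)} to $z\left(\lcm\left(z^{k}(a),z^{k}(b)\right)\right)$, exactly as the paper does. The only difference is cosmetic, in the base case: the paper splits $a$ and $b$ into prime powers and regroups using Lemma \ref{z(n)=lcm(p^e's)} (handling $n=1$ and prime powers separately), whereas you observe that $ab = \lcm(a,b)$ when $\gcd(a,b)=1$ and invoke the general lemma directly --- a slight streamlining that also absorbs the degenerate cases $a=1$ or $b=1$.
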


\begin{proof}
We first consider the case where $n$ has only one prime in its prime factorization.
Without loss of generality, suppose $ a = 1 $ and $ b = n $ and $z^{k}(n) = \lcm(1, z^{k}(n))$. 
If $ n = 1 $, then $ a = b = 1 $ and $z^k\left(1\right) = 1 = \lcm\left(z^k(1), z^k(1)\right)$.

Next consider when $n$ has at least two distinct primes in its prime factorization. 
Let the prime factorization of $n = p_{1}^{e_1}\cdots p_{m}^{e_m}$ and let $a = p_{1}^{e_1}\cdots p_{r}^{e_r}$, $b = p_{r+1}^{e_{r+1}}\cdots p_{m}^{e_m}$ where $1 \leq r < m$. Note that the primes are not necessarily in increasing order. 
We proceed by induction. 
In the base case $k = 1$, and using Lemma \ref{z(n)=lcm(p^e's)} we have:
\begin{align}
z(n)  
& \ = \  \lcm \left( z \left(p_{1}^{e_1}\right), \ldots, z(p_{r}^{e_r}), z(p_{r+1}^{e_{r+1}}),\ldots, z(p_{m}^{e_m}) \right)  \nonumber \\ 
& \ = \  \lcm \left( \lcm \left(z(p_{1}^{e_1}), \ldots, z(p_{r}^{e_r}) \right), \lcm \left(z\left(p_{r+1}^{e_{r+1}}\right),\ldots, z(p_{m}^{e_m}) \right) \right) \nonumber \\
& \ = \  \lcm \left( z(p_{1}^{e_1}\cdots p_{r}^{e_r}), z(p_{r+1}^{e_{r+1}}\cdots p_{m}^{e_m}) \right) \nonumber \\ 
&\ = \  \lcm \left( z(a), z(b) \right).
\end{align}

For the inductive step, assume that for some $k \geq 1$, $z^{k}(n) = \lcm\left(z^{k}(a),z^{k}(b)\right)$. 
We show that $z^{k+1}(n) = \lcm\left(z^{k+1}(a),z^{k+1}(b)\right)$.
We have
\begin{align}
z^{k+1}(n)  
& \ = \  z\left(z^k(n) \right) \nonumber \\
& \ = \  z\left(\lcm(z^{k}(a),z^{k}(b)) \right) \nonumber \\ 
&  \ = \ \lcm\left(z(z^{k}(a)),z(z^{k}(b)) \right) \quad\quad \text {by Lemma 2.4} \nonumber\\ 
&  \ = \   \lcm\left( z^{k+1}(a),z^{k+1}(b) \right).
\end{align}
\end{proof}

Now we are ready to prove Theorem \ref{allngotofp}. 

\begin{proof}
[Proof of Theorem \ref{allngotofp}]
Suppose that $n$ is the smallest positive integer with undefined fixed point order. 
We prove first the case where $n=ab$ where $\gcd(a,b)=1$ and $a,b \geq 2$, then prove cases where $n$ is a power of a prime. 
\medskip \newline
\textit{Case 1:}  
First suppose that $n$ has at least two distinct primes in its prime factorization, so $n$ can be written $n = ab$ where $\gcd(a,b) = 1$ and $ a,b >1$. 
Since $a,b < n$, we know that $a,b$ have finite fixed point order.
Suppose the fixed point order of $a$ is $c$ and the fixed point order of $b$ is $d$.  
Let $k = \text{max}(c,d)$. 
Then by Lemma \ref{z^k(n)=lcm(z^k(a),z^k(b))} we have $z^k(n) = \lcm\left(z^k(a), z^k(b) \right)$ telling us $z^k(n)$ is a fixed point, which is a contradiction. 
\medskip \newline
\textit{Case 2:} Now suppose that $n = p$ for some prime $p$. Notice that $p \neq 2$ since we prove in Lemma \ref{z^i(2^a)equals} that powers of $2$ reach the fixed point 12 in finitely many iterations of $z$. 
By Lemma \ref{z(p)leqp+1}, $z(p)\leq p+1$. 
Note that $z(p) \neq p$, or else $p$ would have fixed point order of $0$. 
Thus, $z(p) = p+1$ or $z(p) < p$

Since we are assuming $p$ does not iterate to a fixed point, neither does $z(p)$. 
Thus $z(p)$ is not a power of $2$ since powers of $2$ iterate to a fixed point by Lemma \ref{z^i(2^a)equals}.
Thus if $z(p) = p+1$ then $z(p) = 2^r \cdot t$ where $r \in \Z^+$ (since $p+1$ is even) and $t \in \Z, t \geq 3$, $ \gcd(2,t)=1$ and $2^r, t < p$. So by the same argument as in Case 1, $z(p)$ has finite fixed point order, so $p$ also has finite fixed point order since it reaches a fixed point after one more iteration of $z$ than $z(p)$. 

If $z(p) < p$ then $z(p)$ has finite fixed point order by the assumption that $p$ is the smallest integer with undefined fixed point order. Thus $p$ also has finite fixed point order. 
\medskip \newline
\textit{Case 3:} Now suppose $n = p^e$ where $p$ is prime and $e \geq 2$. From Lemmas \ref{z(p^e)=z(p^r)z(p)} and \ref{z(n)leq2n} we know that for some $r \in \Z_{\geq 0}, r<e$, we have $ z(p^e) = p^r z(p)$. As $e>1$, we have $z(p) \leq p+1 < p^e$ by Lemma \ref{z(p)leqp+1}. Thus, $z(p)$ has finite fixed point order. Notice that $p^r<p^e$, so $p^r$ also has finite fixed point order. Let $h = \max (\text{fixed point order of } z(p), \text{fixed point order of } p^r)$. 


Note that $\gcd (z(p),p^r)=1$ since $z(p)$ is relatively prime to $p$ by Lemma \ref{z(p)relprimetop}. Then by Lemma \ref{z^k(n)=lcm(z^k(a),z^k(b))},
\begin{equation}
z^{h+1}(p^e) 
\ = \ z^{h}\left(z\left(p^{e}\right)\right) 
\ = \ z^h \left( p^r z(p)\right)
\ = \ \lcm\left( z^h(p^r), z^h(z(p)) \right).
\end{equation}
Thus, $p^e$ iterates to a fixed point within $h+1$ iterations of $z$, so $p^e$ has finite fixed point order. 
\end{proof}


\section{Acknowledgements}
The authors were supported by the National Science Foundation under Grants No. DMS-2241623 and DMS-1947438 while in residence at Williams College in Williamstown, MA. 


\section*{Appendix}
\begin{enumerate}
\item \label{2,4timespow5iterateto12} We first prove that $2^{2} \cdot 5^{b}$ iterates to the fixed point $12 \cdot 5^{b}$. Observe: 
    \begin{align}
    z^{2} \left(4 \cdot 5^{b} \right) 
    & \ = \ z \left( z\left( 4 \cdot 5^{b} \right) \right) \nonumber\\
    & \ = \ z \left( \lcm\left( z(4), z(5^{b}) \right) \right) \nonumber\\
    & \ = \ z \left( \lcm\left( 6, 5^{b} \right) \right) \nonumber\\
    & \ = \ z \left( \lcm\left( 6, 5^{b} \right) \right) \nonumber\\
    & \ = \ z \left( 6 \cdot 5^{b} \right) \nonumber\\
    & \ = \ z \left( 6 \cdot 5^{b} \right) \nonumber\\
    & \ = \ \lcm \left( z(2), z(3), z(5^{b}) \right) \nonumber\\
    & \ = \ \lcm \left( 3, 4, 5^{b} \right) \nonumber\\
    & \ = \ 12 \cdot 5^{b}.
    \end{align}
\item Next we prove that $2 \cdot 5^{b}$ iterates to the fixed point $12 \cdot 5^{b}$. Observe:
    \begin{align}
    z^{4} (2 \cdot 5^{b}) 
    & \ = \ z^{3}\left(z(2 \cdot 5^{b}\right) \nonumber\\
    & \ = \ z^{3}\left( \lcm \left( z(2), z(5^{b}) \right) \right) \nonumber\\
    & \ = \ z^{3}\left( \lcm \left( 3, 5^{b} \right) \right) \nonumber\\
    & \ = \ z^{3} \left( 3 \cdot 5^{b} \right) \nonumber\\
    & \ = \ z^{2} \left( z\left( 3 \cdot 5^{b} \right) \right) \nonumber\\
    & \ = \ z^{2} \left( \lcm\left( z(3), z(5^{b}) \right) \right) \nonumber\\
    & \ = \ z^{2} \left( \lcm\left(4, 5^{b} \right) \right) \nonumber\\
    & \ = \ z^{2} \left(4 \cdot 5^{b} \right) \nonumber\\
    & \ = \ 12.
    \end{align}
\end{enumerate}


\bibliographystyle{plain}


\medskip

\noindent MSC2010: {60B10, 11B39  (primary) 65Q30 (secondary)}

\end{document}